\definecolor{darkblue}{rgb}{0,0,0.6}
\title[Shortening and K-theory of infinite products]{Shortening binary complexes and commutativity of K-theory with infinite products}
\author[D.~Kasprowski]{Daniel Kasprowski}
\address{Rheinische Friedrich-Wilhelms-Universit\"at Bonn, Mathematisches Institut,\newline En\-de\-nicher Allee 60, 53115 Bonn, Germany}
\email{kasprowski@uni-bonn.de}
\urladdr{http://www.math.uni-bonn.de/people/daniel/}
\author[C.~Winges]{Christoph Winges}
\address{Max-Planck-Institut f\"ur Mathematik, Vivatsgasse 7, 53111 Bonn, Germany}
\email{winges@mpim-bonn.mpg.de}
\urladdr{http://guests.mpim-bonn.mpg.de/winges/} 
\keywords{Shortening, binary acyclic complexes, algebraic K-theory of infinite products}
\subjclass[2010]{Primary 19D06; Secondary 18E10}
\newcounter{commentcounter}
\date{\today}
\newcommand{\IN}{\mathbb{N}}
\newcommand{\bbQ}{\mathbbm{Q}}
\newcommand{\bbZ}{\mathbbm{Z}}
\newcommand{\bbN}{\mathbbm{N}}
\newcommand{\bbK}{\mathbb{K}}
\DeclareMathOperator{\id}{id}
\DeclareMathOperator{\supp}{supp}
\DeclareMathOperator{\Idem}{Idem}
\newcommand{\cN}{\mathcal{N}}
\newcommand{\cE}{\mathcal{E}}
\newcommand{\bJ}{\mathbb{J}}
\newcommand{\bP}{\mathbb{P}}
\newcommand{\bQ}{\mathbb{Q}}
\newcommand{\bS}{\mathbb{S}}
\newcommand{\bT}{\mathbb{T}}
\newcommand{\cF}{\mathcal{F}}
\newcommand{\cS}{\mathcal{S}}
\newcommand{\cA}{\mathcal{A}}
\newcommand{\comment}[1]{}
\renewcommand{\phi}{\varphi}
\newcommand{\N}{\mathbb{N}}
\newcommand{\bfW}{\mathbb{W}}
\newcommand{\odd}{\mathrm{odd}}
\newcommand{\even}{\mathrm{ev}}
\numberwithin{equation}{section}
\newtheorem{thm}[equation]{Theorem}
\newtheorem{theorem}[equation]{Theorem}
\newtheorem{prop}[equation]{Proposition}
\newtheorem{cor}[equation]{Corollary}
\newtheorem{lemma}[equation]{Lemma}
\theoremstyle{definition}
\newtheorem{definition}[equation]{Definition}
\newtheorem{rem}[equation]{Remark}
\newtheorem{remark}[equation]{Remark}
\begin{document}
\begin{abstract}
We show that in Grayson's model of higher algebraic K-theory using binary acyclic complexes, the complexes of length two suffice to generate the whole group. 
Moreover, we prove that the comparison map from Nenashev's model for $K_1$ to Grayson's model for $K_1$ is an isomorphism. It follows that algebraic $K$-theory of exact categories commutes with infinite products.
\end{abstract}
\maketitle
\section{Introduction}
On a conceptual level, the algebraic $K$-theory functor is by now well understood in terms of a universal property, which encapsulates the known fundamental properties of Quillen's or Waldhausen's construction \cite{Barwick16, BGT13}.

One of the more elusive properties of algebraic $K$-theory is its compatibility with infinite products.
This question was studied by Carlsson \cite{Carlsson95} in connection to work of Carlsson--Pedersen on the split injectivity of the $K$-theoretic assembly map \cite{CP95},
and permeates the literature adapting their ``descent'' argument to prove more general cases of the $K$-theoretic Novikov conjecture \cite{BR07, RTY14, Kas15}.
Carlsson's proof, while relying on the Additivity theorem, is for the most part concerned with simplicial techniques involving what he calls quasi-Kan complexes.

The present article aims to provide a different perspective on the question.
In \cite{Grayson2012}, Grayson showed that the higher algebraic $K$-theory of an exact category can be expressed in terms of binary acyclic complexes. See \cref{sec:complexes} for a quick review.

In \cite{Nenashev1998} Nenashev gave a different presentation $K^N_1(\cN)$ of $K_1(\cN)$ whose generators are binary acyclic complexes of length two.
Regarding a binary acyclic complex of length two as a class in $K_0(\Omega\cN)$ defines a natural homomorphism
\[ \Phi \colon K_1^N(\cN) \to K_0(\Omega\cN),\]
see \cite[Remark~8.1]{Grayson2012} and the beginning of \cref{sec:nenashev}.

Unpublished work of Grayson shows that $\Phi$ is a surjection, cf.~\cite[Remark~8.1]{Grayson2012}.
Building on Grayson's unpublished argument (see \cref{rem:grayson}), we improve this to a bijectivity statement.

\begin{theorem}\label{thm:nenashev_vs_grayson}
	The map $\Phi$ is an isomorphism.
\end{theorem}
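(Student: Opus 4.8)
The plan is to split the statement into surjectivity and injectivity of $\Phi$. Surjectivity is a formal consequence of the shortening theorem of \cref{sec:complexes}: the image of $\Phi$ is exactly the subgroup of $K_0(\Omega\cN)$ generated by the classes of binary acyclic complexes of length two (allowing zero entries, so that shorter complexes are covered as well), while $K_0(\Omega\cN)$ is generated by the classes of all bounded binary acyclic complexes; since shortening identifies the class of any such complex with the class of one of length two, $\Phi$ is onto. In particular this recovers Grayson's unpublished surjectivity statement (cf.~\cref{rem:grayson}).

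For injectivity I would produce a homomorphism $\Psi\colon K_0(\Omega\cN)\to K_1^N(\cN)$ which is left inverse to $\Phi$. On the generators of Grayson's group, the bounded binary acyclic complexes, one sets $\Psi$ equal to the Nenashev symbol on complexes of length at most two, and on a longer complex one runs the shortening procedure and takes the Nenashev symbol of the resulting length-two complex. The content is then to check that $\Psi$ is well defined: it must not depend on the choices made while shortening, and it must respect the defining relations of $K_0(\Omega\cN)$, namely that a binary complex whose two differentials agree is sent to $0$, and that a short exact sequence $0\to C'\to C\to C''\to 0$ of binary acyclic complexes is sent to the relation $\Psi(C)=\Psi(C')+\Psi(C'')$. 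The first two points rely on Nenashev's relation declaring a symbol with two coinciding exact sequences to be trivial, together with the fact that a complex with equal differentials may be shortened through complexes with equal differentials; the additivity point rests on Nenashev's double short exact sequence relation, which is precisely additivity for length-two binary complexes, applied after shortening $C'$, $C$ and $C''$ compatibly. Once $\Psi$ is in hand, $\Psi\circ\Phi=\id_{K_1^N(\cN)}$ because a length-two complex is already its own shortening; hence $\Phi$ is injective, and with surjectivity it is an isomorphism with inverse $\Psi$.

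The step I expect to be the real obstacle is the well-definedness of $\Psi$, and within it the interaction of shortening with short exact sequences of complexes of arbitrary length: shortening a subcomplex, a total complex and a quotient are a priori unrelated, so one must either perform the shortening construction relative to a given short exact sequence or else trace, step by step through the proof of the shortening theorem, that every elementary manipulation used there becomes one of Nenashev's two relations once the complexes are replaced by their length-two shortenings. This is exactly where Grayson's surjectivity argument must be strengthened. (An alternative route to injectivity would be to combine Nenashev's isomorphism $K_1^N(\cN)\cong K_1(\cN)$ with Grayson's isomorphism $K_0(\Omega\cN)\cong K_1(\cN)$ and check that $\Phi$ is compatible with both on length-two generators; this trades the work above for a comparison of two explicitly described isomorphisms.)
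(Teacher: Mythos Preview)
Your overall strategy---construct an explicit $\Psi$ via shortening and verify it is a two-sided inverse to $\Phi$---is exactly the paper's. But you have inverted where the difficulty lies.

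The paper's shortening (developed in \cref{sec:nenashev}; the procedures of \cref{sec:shortening} only reach length four or seven, so they do not by themselves give surjectivity of $\Phi$) produces an explicit formula $x(\bP,k)$ expressing $[\bP]$ as the class of a single length-two complex $\bP_k$ plus a sum of ``switch'' classes $[\tau_{J_n}]$. Because this formula is built functorially from the syzygies $J_n$, it is \emph{evidently} additive on short exact sequences once a common $k$ is used for all three terms; the only residual issue is the mismatch between $k(\bP')$ and $k(\bP)$, and that is handled by the observation that increasing $k$ past the support merely swaps top and bottom differentials, which by \cref{lem:signs} contributes a sign. So the step you flag as the ``real obstacle'' is, in the paper's implementation, essentially a one-liner.

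Conversely, your claim that $\Psi\circ\Phi=\id$ ``because a length-two complex is already its own shortening'' is where the genuine work hides. The shortening procedure applied to a length-two $\bP$ does \emph{not} return $\bP$: it returns the different length-two complex $\bP_1$ together with a $\tau$-term, and one must verify \emph{in $K_1^N(\cN)$}, using only Nenashev's two relations, that $[\bP]=[\tau_{P_2}]-[\bP_1]$. (In your piecewise definition of $\Psi$ this same computation reappears as the consistency check between the two pieces, needed whenever a short exact sequence mixes a length-two complex with a longer one.) The paper carries this out by writing down a specific binary acyclic double complex relating $\bP_1$ to $\bP$ with differentials swapped and invoking \cref{lem:signs} and \cref{lem:ordertwo}. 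This verification, not additivity, is the actual crux.
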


We use this to show the following theorem.
\begin{thm}
	\label{thm:commute}
	For every family $\{\cN_i\}_{i\in I}$ of exact categories, the natural map
	\[\bbK^{-\infty}\left(\prod_{i\in I}\cN_i\right)\to \prod_{i\in I} \bbK^{-\infty}(\cN_i)\]
	is a $\pi_*$-isomorphism.
\end{thm}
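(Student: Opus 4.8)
The plan is to reduce the statement, via Grayson's model, to the assertion that $K_1$ commutes with arbitrary products of exact categories, and then to derive this from \cref{thm:nenashev_vs_grayson} together with the shortening theorem (that binary acyclic complexes of length two generate $K_0(\Omega\cN)$). Since $\pi_n$ of a product of spectra is the product of the $\pi_n$'s, the natural map $\bbK^{-\infty}(\prod_i\cN_i)\to\prod_i\bbK^{-\infty}(\cN_i)$ is a $\pi_*$-isomorphism if and only if $K_n(\prod_i\cN_i)\to\prod_i K_n(\cN_i)$ is an isomorphism for every $n\in\IZ$. For $n\geq 1$, Grayson's theorem gives a natural isomorphism $K_n(\cN)\cong K_0(\Omega^n\cN)$; writing $\Omega^n\cN=\Omega(\Omega^{n-1}\cN)$ and applying the case $n=1$ of the theorem to $\Omega^{n-1}\cN$ identifies this with $K_1(\Omega^{n-1}\cN)$. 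As the functor $\Omega$ (binary acyclic complexes with the levelwise exact structure) is computed coordinatewise, $\Omega^{n-1}(\prod_i\cN_i)=\prod_i\Omega^{n-1}\cN_i$, so the comparison map in degree $n$ becomes the $K_1$-comparison map of the family $\{\Omega^{n-1}\cN_i\}$. For $n\leq 0$ a standard delooping argument reduces to the previous case, using a suspension functor $\Sigma$ for exact categories that is exact, commutes with products, and satisfies $K_*(\Sigma\cN)\cong K_{*-1}(\cN)$, so that $K_n(\cN)\cong K_1(\Sigma^{1-n}\cN)$ and the comparison map in degree $n$ is again a $K_1$-comparison map. It thus suffices to prove, for every family $\{\cN_i\}$ of exact categories, that the natural map $K_1(\prod_i\cN_i)\to\prod_i K_1(\cN_i)$ is an isomorphism.

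To analyse $K_1$ we work with binary complexes: by Grayson's theorem and \cref{thm:nenashev_vs_grayson} there are natural isomorphisms $K_1(\cN)\cong K_0(\Omega\cN)\cong K_1^N(\cN)$, and a binary acyclic complex of length two is the same thing as a double short exact sequence. Two facts, immediate from Nenashev's presentation, will be used: the class of a direct sum of double short exact sequences is the sum of their classes, and interchanging the two differentials negates the class. It follows that every element of $K_1(\cN)$ is the class of a \emph{single} binary acyclic complex of length two. Since a binary acyclic complex of length two over $\prod_i\cN_i$ is precisely a tuple of binary acyclic complexes of length two over the $\cN_i$, every element of $\prod_i K_1(\cN_i)$ lifts to a single such complex over $\prod_i\cN_i$; hence the comparison map is surjective.

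For injectivity, an element of the kernel is represented by a single binary acyclic complex $C=(C_i)_i$ of length two over $\prod_i\cN_i$ with $[C_i]=0$ in $K_0(\Omega\cN_i)$ for every $i$. The essential input — and the main place where the full strength of our results is needed — is that the vanishing of each $[C_i]$ admits a witness of a fixed shape, independent of $i$: there are a diagonal binary acyclic complex $T_i$ (two equal differentials, hence trivial in $K_0(\Omega\cN_i)$), binary acyclic complexes $D_i$ and $D_i'$ over $\cN_i$, a short exact sequence $0\to D_i\to E_i\to D_i'\to 0$ of binary acyclic complexes, and an isomorphism $E_i\cong C_i\oplus T_i\oplus D_i\oplus D_i'$. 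Granting this, the tuples $(T_i)_i$, $(D_i)_i$, $(E_i)_i$, $(D_i')_i$ are binary acyclic complexes over $\prod_i\cN_i$; the tuple of the above sequences is a short exact sequence of binary acyclic complexes over $\prod_i\cN_i$, since such sequences over a product are exactly tuples of short exact sequences; and the isomorphisms assemble to one over $\prod_i\cN_i$. Running the easy direction of the witness criterion over $\prod_i\cN_i$ — and using that a coordinatewise diagonal complex is diagonal — yields $[(C_i)_i]=0$ in $K_0(\Omega\prod_i\cN_i)$. This proves injectivity, and hence the theorem.

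The step I expect to be the main obstacle is the bounded-shape vanishing criterion invoked in the last paragraph. For ordinary $K_0$ of an exact category, certifying that a class vanishes may a priori require a short exact sequence, or a filtration, whose length grows with the class, which is exactly why the comparison map for $K_0$ is not obviously an isomorphism on \emph{infinite} products. In our setting the shortening theorem cuts every binary acyclic complex down to one of length two, and \cref{thm:nenashev_vs_grayson} identifies the relations among these with Nenashev's — finitely many generators arranged in one bounded pattern. Turning this into a uniform, $i$-independent witness for $[C_i]=0$ — one short exact sequence of binary complexes together with a diagonal summand — is the technical heart of the matter, and it is precisely what the combination of the two theorems is set up to deliver.
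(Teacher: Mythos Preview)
Your reduction step contains a genuine error that undermines the whole strategy. You claim that the functor $\Omega$ (binary acyclic complexes) is ``computed coordinatewise'' and conclude $\Omega^{n-1}(\prod_i\cN_i)=\prod_i\Omega^{n-1}\cN_i$. This is false: a binary acyclic complex over $\prod_i\cN_i$ is \emph{bounded}, whereas a tuple of bounded complexes over the $\cN_i$ need not be uniformly bounded, so $B^q(\prod_i\cN_i)\subsetneq\prod_i B^q(\cN_i)$. The failure of unbounded complex categories to commute with infinite products is exactly the obstruction the shortening results are designed to overcome, so you cannot invoke it as a free move in the reduction from $K_n$ to $K_1$. (There is also a type error: $\Omega^{n-1}\cN$ is not an exact category but a total cofiber of a cube of spectra, so writing $K_1(\Omega^{n-1}\cN)$ already requires interpretation.)

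Your injectivity argument for $K_1$ has the same gap in a different guise. You posit a ``bounded-shape vanishing criterion'' --- one short exact sequence of binary complexes plus a diagonal summand witnessing $[C_i]=0$ --- and say this is what the two theorems ``are set up to deliver''. But neither \cref{thm:nenashev_vs_grayson} nor the shortening theorem furnishes such a criterion; they control generators, not the complexity of relations. The witnesses $T_i,D_i,D_i',E_i$ you need are a priori binary acyclic complexes of unbounded length in $i$, so their tuple need not live over $\prod_i\cN_i$. What actually works is Heller's criterion (\cref{lem:hellerscriterion}) applied to $K_0$ of a \emph{fixed bounded-length} category such as $B^q_{[0,7]}\cN$: there every witness is automatically supported on $[0,7]$ and assembles across the product. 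This is how the paper proceeds: it exhibits $K_0(\Omega^n\cN)$ as a natural retract of $K_0(\Omega^n_{[0,7]}\cN)$ (\cref{thm:dim7}), uses that $(B^q_{[0,7]})^n$ and $(C^q_{[0,7]})^n$ genuinely commute with products, and invokes Heller only for $K_0$. For negative degrees the paper uses Schlichting's suspension and localization sequence rather than an unspecified $\Sigma$; in particular one has to check that the relevant quotient $\prod_i\cF\cN_i/\prod_i\cN_i$ agrees with $\prod_i\cS\cN_i$, which is not automatic.
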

Since Grayson's results in \cite{Grayson2012} rely only on the fundamental properties of $K$-theory,
our proof is not only elementary, but also exhibits \cref{thm:commute} as a consequence of the universal property of algebraic $K$-theory.

Since the proof of \cref{thm:nenashev_vs_grayson} is technical, we will begin by showing the following weaker statements. Here the proofs are considerably easier and they suffice to deduce \cref{thm:commute}.

In \cref{sec:shortening} we give a proof that the complexes of length four suffice to generate $K_0(\Omega \cN)$.
\begin{thm}
	\label{thm:dim4}
	The canonical map $K_0(\Omega_{[0,4]}\cN)\to K_0(\Omega\cN)$ is a surjection.
\end{thm}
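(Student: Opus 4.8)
The plan is to prove surjectivity by an induction that successively shortens binary acyclic complexes, using only additivity along short exact sequences, the vanishing of trivial complexes, and the invariance up to sign of the class under shifting. Since $K_0(\Omega\cN)$ is a quotient of the Grothendieck group of bounded binary acyclic complexes, it suffices to show that the class of every such complex lies in the image of $K_0(\Omega_{[0,4]}\cN)$; and because shifting a complex changes its class in $K_0(\Omega\cN)$ at most by a sign, the class of any binary acyclic complex supported in a window of at most five consecutive degrees already lies in that image. So the goal reduces to the following: given a binary acyclic complex $P$ supported in $[0,n]$ with $n\geq 5$, write $[P]$ in $K_0(\Omega\cN)$ as a sum of classes of binary acyclic complexes each supported in $[0,n-1]$ or in a five-term window, and then induct on $n$. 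Before doing this I would record the elementary calculus in $K_0(\Omega\cN)$: additivity of $[-]$ along a short exact sequence of binary acyclic complexes, the vanishing of trivial complexes together with the resulting invariance of $[P]$ under passing to $P\oplus T$ for a trivial acyclic complex $T$, and the behaviour of $[-]$ under shift.

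The mechanism is a single shortening step, and the obstacle it has to overcome is that the two differentials $d,d'$ of $P$ induce different admissible filtrations of the underlying graded object, so that a subcomplex cut out using $d$ need not be acyclic for $d'$. To get around this I would first replace $P$ by $P\oplus T$ for a trivial acyclic complex $T$ concentrated in the top few degrees (this does not change the class, $T$ being trivial); after this enlargement the two differentials should be compatible enough on a window of at most five consecutive degrees containing the top degree $n$ that one can exhibit a termwise admissible binary subcomplex $A\subseteq P\oplus T$, supported in that window and acyclic for both $d$ and $d'$, whose quotient $(P\oplus T)/A$ is again binary acyclic and supported in $[0,n-1]$. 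Additivity then gives $[P]=[A]+[(P\oplus T)/A]$ with $A$ supported in a five-term window and the quotient of strictly smaller length, which is exactly what the induction needs.

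The step I expect to be the main obstacle is the simultaneous construction of $T$ and of the subobjects $A_i\subseteq(P\oplus T)_i$ in the chosen window so that $A$ is acyclic for each of the two differentials, the inclusion $A\hookrightarrow P\oplus T$ is admissible, and the quotient is acyclic for both differentials — in particular acyclic at its new top degree $n-1$, which is the condition that genuinely constrains the choice. This is where the hypothesis $n\geq 5$ enters: one needs a window wide enough (five terms) so that the trivial summand $T$ can absorb the discrepancy between the $d$- and $d'$-filtrations, and one needs degrees strictly below the window so that the quotient can remain acyclic. Once $A$ has been produced, the remaining verifications — admissibility of the relevant monomorphisms and epimorphisms, and degreewise exactness of $A$ and of the quotient — are routine diagram chases that I would isolate in a lemma so as not to obscure the main line of argument.
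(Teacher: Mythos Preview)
Your outline has the right overall shape (induct on length, peel off something short at one end), and you correctly locate the crux: the two differentials of $\bP$ induce \emph{different} admissible filtrations, so a naive truncation is not a binary subcomplex. But the proposed fix---``replace $\bP$ by $\bP\oplus T$ for a trivial $T$ and then find an acyclic binary subcomplex $A$ in a five-term window''---is not a proof; it is a restatement of the difficulty. Adding a diagonal summand $T$ does not by itself reconcile the images $J_{n-1}=\operatorname{im}(d_n)$ and $K_{n-1}=\operatorname{im}(d'_n)$ inside $P_{n-1}$: any candidate $A_{n-1}\subseteq (P\oplus T)_{n-1}$ must contain both $J_{n-1}$ and $K_{n-1}$ (to be closed under each differential), while for the quotient to be acyclic for both differentials at the new top degree you need the cokernels $P_{n-1}/J_{n-1}$ and $P_{n-1}/K_{n-1}$ to match up. In a general exact category there is no reason these constraints can be met just by choosing $T$ cleverly; you have offered no construction, and ``should be compatible enough'' is exactly the step that needs an idea.

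The missing ingredient is a way to compare $J_{n}$ and $K_{n}$ beyond the observation that they are (in general non-isomorphic) subobjects of the same object. The paper uses that $[J_n]=[K_n]$ in $K_0(\cN)$ and invokes Heller's criterion to obtain witnessing objects $A_n,B_n,S_n$ together with admissible short exact sequences $A_n\rightarrowtail J_n\oplus S_n\twoheadrightarrow B_n$ and $A_n\rightarrowtail K_n\oplus S_n\twoheadrightarrow B_n$. These witnesses are precisely what lets one splice the two filtrations into a single length-four binary acyclic complex $\bS_n$. The shortening step is then not a single short exact sequence of binary complexes as you envisage, but a binary acyclic \emph{double} complex to which Nenashev's relation is applied, yielding $[\bP]=[\bS_2]-[\bP']$ with $\bP'$ one shorter. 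Iterating gives $[\bP]=\sum_{n\ge 2}(-1)^n[\bS_n]$ with each $\bS_n$ supported on $[0,4]$. If you want to salvage your approach you will need either this Heller-type input or some substitute that actually produces the subcomplex $A$; as written, the central construction is absent.
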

A closer inspection of the constructions involved in proving \cref{thm:dim4} provides a candidate for a homomorphism $K_0(\Omega\cN) \to K_0(\Omega_{[0,4]}\cN)$.
Admitting slightly larger complexes, we show that this homomorphism is well-defined and provides a right inverse to the canonical comparison map.
\begin{thm}
	\label{thm:dim7}
	For every $n\in\N$ the canonical map $K_0(\Omega_{[0,7]}^n\cN) \to K_0(\Omega^n\cN) \cong K_n(\cN)$ admits a natural section.
\end{thm}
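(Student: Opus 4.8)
The plan is to promote the ``shortening'' procedure underlying the proof of \cref{thm:dim4} to an actual homomorphism. Recall that \cref{thm:dim4} is proved by iterating a single \emph{elementary shortening} step: given a binary acyclic complex supported in a large range of degrees, one rewrites its class in $K_0(\Omega\cN)$ — and, working componentwise, in $K_0(\Omega^n\cN)$ — as a combination of classes of binary acyclic complexes supported in strictly fewer degrees. Iterating this step produces, for every binary acyclic complex $C$, a canonical combination $\sigma(C)$ of complexes of bounded length with $[\sigma(C)] = [C]$; the candidate for the section is $[C] \mapsto [\sigma(C)]$. To turn this assignment into a well-defined homomorphism $K_0(\Omega^n\cN) \to K_0(\Omega^n_{[0,7]}\cN)$ I must establish two points. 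First, the elementary step should be realised by an \emph{exact} functor between the appropriate categories of binary acyclic complexes, so that it induces a genuine map on $K_0$ rather than a bare assignment on generators. Second, the output of that functor must be compatible with the defining relations of Grayson's $K_0$, that is, additivity along short exact sequences of binary acyclic complexes and vanishing of binary complexes whose two differentials coincide. Allowing the output to occupy degrees $[0,7]$ rather than $[0,4]$ is precisely the slack needed to write the functorial version of the step without auxiliary choices.

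Concretely, I would isolate, for each sufficiently large $k$, an exact functor $S_k$ from binary acyclic complexes supported in $[0,k]$ to binary acyclic complexes supported in $[0,k-1]$, together with a natural short exact sequence — or a short zig-zag of such — of binary acyclic complexes witnessing $[C] = [S_k C] + (\text{classes of complexes with equal differentials})$ in $K_0$. Composing the functors $S_k$ down to $k = 7$ yields the exact functor underlying $\sigma$; exactness forces the composite to induce a well-defined map on $K_0$, and the witnessing exact sequences force that map to be a right inverse of the canonical comparison map, hence a section. Because the functor $S_k$ is built entirely from the exact structure (admissible kernels and cokernels, pullbacks and pushouts of admissible morphisms) together with direct sums of the given data, it is automatically natural for exact functors $\cN \to \cN'$, which yields the naturality asserted in the theorem. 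For the iterated statement I would apply the single-direction construction in each of the $n$ binary directions of $\Omega^n\cN$ successively; naturality of $S_k$ in the ground category guarantees that the shortenings in distinct directions commute, so the result lands in $\Omega^n_{[0,7]}\cN$ independently of the chosen order.

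The main obstacle, I expect, is constructing the elementary step as an exact, choice-free functor in the first place. The procedure that suffices for \cref{thm:dim4} almost certainly relies on non-canonical input — a choice of truncation, a chosen complement of a cycle subobject, or an order in which the degrees are processed — each harmless for a surjectivity statement but fatal for a homomorphism. A structural feature of binary complexes makes this delicate: a binary acyclic complex carries two differentials with two distinct families of cycle subobjects, so the ``good'' truncation that usually splits an acyclic complex into two-term pieces is unavailable, and one must design a replacement that treats the two differentials symmetrically. Checking that this replacement is compatible with short exact sequences and with the relation killing complexes of equal differentials is where the extra degrees are consumed, since the identifications one would like to use hold only modulo the relations of $K_0$, not on the nose. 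Once a functorial elementary step together with its witnessing exact sequences is available, iterating it, verifying the section property, and deducing naturality should be formal; producing that step is the heart of the matter.
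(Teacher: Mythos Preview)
Your plan hinges on realising the elementary shortening step as a choice-free exact functor $S_k$, but this is not what the paper does and is almost certainly not possible in the form you describe. The shortening of \cref{lem:dim4} requires, for each pair of cycle objects $J_n$ and $K_n$ coming from the two differentials, auxiliary objects $A_n, B_n, S_n$ together with exact sequences $A_n \rightarrowtail J_n \oplus S_n \twoheadrightarrow B_n$ and $A_n \rightarrowtail K_n \oplus S_n \twoheadrightarrow B_n$ witnessing that $J_n$ and $K_n$ are stably extension-equivalent. These exist by Heller's criterion (\cref{lem:hellerscriterion}), but they are not canonical and cannot be expected to depend functorially on $\bP$; in a general exact category there is no ``stable extension-equivalence witness'' functor. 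So the obstacle you correctly isolate as ``the heart of the matter'' is not overcome in the paper by finding a functor at all.

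What the paper does instead is make arbitrary choices and then prove, in \cref{prop:split}, that the resulting class $\sum_n (-1)^n[\bS_n] \in K_0(\Omega_{[0,7]}\cN)$ is independent of them. The passage from $[0,4]$ to $[0,7]$ is consumed not by making the construction functorial, as you guessed, but by this independence argument: to compare two choices at level $k$ one forms the binary acyclic double complex with rows $\bS_k' \oplus (\bS_{k+1}[2])$ and $\bS_k \oplus (\bS_{k+1}'[2])$, whose support has length seven, and applies \cref{lem:nena} together with \cref{lem:ordertwo} to kill the residual flip terms. Compatibility with short exact sequences $\bP' \rightarrowtail \bP \twoheadrightarrow \bP''$ is handled by a separate device you do not mention: the Additivity theorem gives $[J'_n \rightarrowtail J_n \twoheadrightarrow J''_n] = [K'_n \rightarrowtail K_n \twoheadrightarrow K''_n]$ in $K_0(\cE\cN)$, so Heller's criterion applied to the exact category $\cE\cN$ of short exact sequences yields \emph{compatible} choices of $A_n, B_n, S_n$ sitting in short exact sequences themselves, whence $\bS'_n \rightarrowtail \bS_n \twoheadrightarrow \bS''_n$. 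In short, the missing idea is not to eliminate the choices but to prove they do not matter; your proposal never reaches that mechanism.
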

In \cref{sec:products}, we use this right inverse to show \cref{thm:commute}. Finally, we give the proof of \cref{thm:nenashev_vs_grayson} in \cref{sec:nenashev}.

\subsection*{Acknowledgments}
We are indebted to Daniel Grayson for sharing with us his proof of surjectivity of $\Phi$. We thank Robin Loose for helpful discussions. 

\section{Binary complexes}\label{sec:complexes}

In this section, we give a quick review of Grayson's description of the higher algebraic $K$-groups \cite{Grayson2012}.
In the following $\cN$ will always denote an exact category. Chain complexes in $\cN$ will always be assumed to be \emph{bounded}.
Denote by $C\cN$ the category of (bounded) chain complexes in $\cN$.

\begin{definition}
 A chain complex $(P_*,d)$ in $\cN$ is called \emph{acyclic} if each differential admits a factorization into an admissible epimorphism followed by an admissible monomorphism
 \[ d_n \colon P_n \twoheadrightarrow J_{n-1} \rightarrowtail P_{n-1} \]
 such that $J_n \rightarrowtail P_n \twoheadrightarrow J_{n-1}$ is a short exact sequence.
 
 Denote by $C^q\cN \subseteq C\cN$ the full subcategory of acyclic chain complexes.
\end{definition}

\begin{definition}
 A \emph{binary acyclic complex} $(P_*, d, d')$ is a graded object $P_*$ over $\cN$ together with two degree $-1$ maps $d, d' \colon P_* \to P_*$ such that both $(P_*,d)$ and $(P_*,d')$ are acyclic chain complexes.
 The differentials $d$ and $d'$ are called the \emph{top} and \emph{bottom} differential.
\end{definition}

A morphism of binary acyclic complexes is a degree $0$ map of underlying graded objects which is a chain map with respect to both differentials.
The resulting category of binary acyclic complexes is denoted by $B^q\cN$.
There is a natural exact functor $\Delta \colon C^q(\cN) \to B^q(\cN)$ which duplicates the differential of a given acyclic chain complex.

Fix $n > 0$.
Since both $C^q\cN$ and $B^q\cN$ are exact categories, these constructions can be iterated.
For any finite sequence $\bfW = (W_1,\dots,W_n)$ in $\{B,C\}$, denote by $\bfW^q\cN$ the category $W_1^q\dots W_n^q\cN$.
If $\bfW$ is the constant sequence on the letter $B$, we also write $(B^q)^n\cN$.
Letting $\bfW$ vary over all possible choices defines a commutative $n$-cube of exact categories
which induces a commutative $n$-cube of spectra upon taking algebraic $K$-theory.
The spectrum $\bbK(\Omega^n\cN)$ is defined to be the total homotopy cofiber of this cube.

We rely on the following result about $\bbK(\Omega^n\cN)$.

\begin{theorem}[{Grayson, \cite[Corollary~7.2]{Grayson2012}}]
 The abelian groups $K_n(\cN)$ and $K_0(\Omega^n\cN)$ are naturally isomorphic.
\end{theorem}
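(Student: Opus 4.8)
The plan is to prove the sharper, spectrum-level statement that $\bbK(\Omega\cN)$ is naturally equivalent to the connective cover $\tau_{\geq 0}\Omega\bbK(\cN)$ of the loop spectrum of $\bbK(\cN)$ --- since $B^q\cN$ and $C^q\cN$ are exact categories all the spectra in sight are connective, so this is the best one can hope for --- and then to obtain the theorem by passing to $\pi_0$. First one reduces to the case $n=1$: as $\cN\mapsto B^q\cN$ and $\cN\mapsto C^q\cN$ are exact and commute with one another up to natural exact equivalence, the $n$-cube defining $\Omega^n\cN$ is the $1$-cube $\Delta\colon C^q(-)\to B^q(-)$ applied levelwise to the $(n-1)$-cube defining $\Omega^{n-1}(-)$; applying $\bbK$ and total homotopy cofibres, using that each instance of $\Delta$ is split and feeding in the $n=1$ equivalence for the exact categories occurring in the $(n-1)$-cube, one gets $\bbK(\Omega^n\cN)\simeq\tau_{\geq 0}\Omega\,\bbK(\Omega^{n-1}\cN)$, so by induction $\bbK(\Omega^n\cN)\simeq\tau_{\geq 0}\Omega^n\bbK(\cN)$ and $K_0(\Omega^n\cN)\cong\pi_n\bbK(\cN)=K_n(\cN)$.

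For $n=1$ I would argue as follows. The functor $\Delta$ has the natural retraction $\mathrm{top}$ forgetting the bottom differential, so $\bbK(\Delta)$ is a split monomorphism and $\bbK(B^q\cN)\simeq\bbK(C^q\cN)\vee\bbK(\Omega\cN)$; the task is to identify the complementary summand. To do so, view the acyclic complexes inside all complexes equipped with a coarser notion of weak equivalence: $C^q\cN$ is exactly the full subcategory of $C\cN$ of complexes contractible with respect to quasi-isomorphisms, and $B^q\cN$ is the full subcategory of the exact category $\tilde B\cN$ of all binary complexes consisting of those contractible with respect to \emph{bi-quasi-isomorphisms}, i.e.\ maps which are quasi-isomorphisms for both differentials. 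Waldhausen's fibration theorem, together with the Gillet--Waldhausen theorem, then presents $\bbK(C^q\cN)$ as the fibre of $\bbK(C\cN)\to\bbK(\cN)$ and $\bbK(B^q\cN)$ as the fibre of $\bbK(\tilde B\cN)\to\bbK(\tilde B\cN,\mathrm{biqis})$, compatibly along $\Delta$. Filtering complexes by support --- the successive quotients being copies of $\cN$ --- computes $\bbK(C\cN)$ and $\bbK(\tilde B\cN)$, and since $\Delta$ changes neither the underlying graded object nor the exact structure it induces an equivalence $\bbK(C\cN)\xrightarrow{\sim}\bbK(\tilde B\cN)$. Taking cofibres of the vertical maps thus collapses the middle term and gives
\[ \bbK(\Omega\cN)\ \simeq\ \Omega\operatorname{cofib}\bigl(\bbK(\cN)\xrightarrow{\Delta}\bbK(\tilde B\cN,\mathrm{biqis})\bigr). \]
Hence the $n=1$ case is equivalent to the assertion that this cofibre is $\tau_{\geq 1}\bbK(\cN)$: beyond the copy of $\bbK(\cN)$ that $\mathrm{top}$ splits off, the $K$-theory of binary complexes modulo bi-quasi-isomorphism contributes exactly a connective cover of $\Sigma\bbK(\cN)$, arising from the ``difference of the two differentials.''

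The main obstacle is exactly this last identification. Everything preceding it is soft, but the support filtration --- which settled every other comparison --- is of no use here, because truncation destroys quasi-isomorphisms; the contribution of the second differential must therefore be computed by a genuine analysis of binary acyclic complexes rather than by a filtration argument. In essence this is the statement that a binary acyclic complex of length two represents a class in $K_1(\cN)$ --- Nenashev's presentation --- promoted to the level of $K$-theory spectra, and making it precise is where all the work goes. It also requires enough control over binary complexes to isolate this one extra loop; obtaining such control --- in particular, being able to replace a binary acyclic complex by a short one without altering its class --- is precisely the kind of question the shortening techniques of this paper are designed to address.
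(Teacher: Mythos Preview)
The paper does not prove this theorem: it is quoted verbatim as \cite[Corollary~7.2]{Grayson2012} and used as a black box on which the rest of the paper rests. So there is no ``paper's own proof'' to compare against; what you have written is a sketch of how one might reprove Grayson's result.

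As a sketch it is reasonable up to the point where you yourself flag the gap. The reduction to $n=1$, the splitting of $\bbK(\Delta)$, and the use of the fibration theorem together with the support filtration to identify $\bbK(C\cN)\simeq\bbK(\tilde B\cN)$ are all fine. But the ``last identification'' you single out --- computing $\bbK(\tilde B\cN,\mathrm{biqis})$ and showing that the cofibre of $\bbK(\cN)\to\bbK(\tilde B\cN,\mathrm{biqis})$ is a connective cover of $\Sigma\bbK(\cN)$ --- is the entire content of Grayson's theorem, and you have not indicated how to do it. Grayson's actual argument does not proceed via this particular localisation; it is a direct comparison of iterated $S_\bullet$-constructions using Additivity, and the extra loop comes out of an explicit combinatorial analysis rather than from an identification of a localised Waldhausen category.

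Your final paragraph contains a genuine misconception. The shortening techniques of the present paper operate on $K_0(\Omega\cN)$ \emph{after} Grayson's theorem has been invoked to identify that group with $K_1(\cN)$; they say nothing at the spectrum level and cannot be used to supply the missing step. Invoking them here would be circular: the paper's \cref{thm:dim4}, \cref{thm:dim7} and \cref{thm:nenashev_vs_grayson} all take Grayson's isomorphism as input. If you want to complete your sketch, you need an independent spectrum-level argument for the cofibre computation --- essentially reproving the core of \cite{Grayson2012} --- not the $K_0$-level manipulations developed here.
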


This theorem facilitates a completely algebraic description of higher $K$-theory \cite[Corollary~7.4]{Grayson2012}.
For example, it implies that $K_1(\cN)$ can be described as the Grothendieck group of the category of binary acyclic complexes $B^q\cN$
with the additional relation that a binary acyclic complex represents the trivial class if its top and bottom differential coincide.
We use this description of $K_1(\cN)$ extensively in \cref{sec:shortening}.

Throughout this article, we employ the following variations of this construction:
Let $J \subseteq \bbZ$ be an interval, i.e.~$J = \{ z \in \bbZ \mid a \leq z \leq b \}$ for some $a,b \in \bbZ \cup \{\pm\infty\}$.
Then we denote by $B^q_J\cN$ and $C^q_J\cN$ the categories of (binary) acyclic complexes supported on $J$.
Thus, any sequence of intervals $\bJ = (J_1,\dots,J_n)$ in $\bbZ$ gives rise to an abelian group $K_0(\Omega_\bJ\cN) := K_0(\Omega_{J_1}\dots\Omega_{J_n}\cN)$.
If $\bJ' = (J_1',\dots,J_n')$ is another such sequence satisfying $J_k \subseteq J_k'$ for all $k$, we have a natural homomorphism
\[ i_{\bJ,\bJ'} \colon K_0(\Omega_\bJ\cN) \to K_0(\Omega_{\bJ'}\cN). \]
Note that $\Delta \colon C^q_J\cN \to B^q_J\cN$ admits two natural splits $\top$ and $\bot$ which forget the bottom, respectively top, differential of a binary acyclic complex.
Using one of these, we see that $i_{\bJ,\bJ'}$ is naturally a retract of the homomorphism
\[ K_0(\Omega_{J_1}B^q_{J_2}\dots B^q_{J_n}\cN) \to K_0(\Omega_{J_1'}B^q_{J_2'}\dots B^q_{J_n'}\cN). \]
Moreover, we observe that any permutation $\sigma \colon \{1,\dots,n\} \xrightarrow{\cong} \{1,\dots,n\}$ induces an isomorphism
\[ K_0(\Omega_{I_1}\dots\Omega_{I_n}\cN) \cong K_0(\Omega_{I_\sigma(1)}\dots\Omega_{I_\sigma(n)}\cN). \]

It is notationally convenient to work with $\bbN$-graded bounded chain complexes instead of $\bbZ$-graded chain complexes.
The following lemma justifies this convention.

\begin{lemma}
 The natural map $K_0(\Omega_{[0,\infty)}^n\cN) \to K_0(\Omega^n\cN)$ is an isomorphism for all $n \geq 1$.
\end{lemma}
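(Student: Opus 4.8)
The plan is to reduce the lemma to the assertion that shifting a bounded complex up by one degree acts invertibly on $K_0(\Omega_{[0,\infty)}^n\cN)$, and to prove that by a mapping-cone argument. Since all complexes are bounded, $C^q\cN$ is the filtered union of the subcategories $C^q_{[-k,\infty)}\cN$, and likewise after iterating the binary- and acyclic-complex constructions. As $\bbK$ commutes with filtered colimits of exact categories and $\bbK(\Omega^n\cN)$ is the total homotopy cofiber of a cube of spectra $\bbK(\bfW^q\cN)$ — a finite homotopy colimit, which again commutes with filtered colimits — we obtain $\bbK(\Omega^n\cN)\simeq\colim_k\bbK(\Omega^n_{[-k,\infty)}\cN)$, hence $K_0(\Omega^n\cN)\cong\colim_k K_0(\Omega^n_{[-k,\infty)}\cN)$, the transition maps being induced by the interval inclusions. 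Translation by $k$ (shifting each of the $n$ gradings up by $k$) is an isomorphism of exact categories, compatible with the whole $\Omega$-cube and with interval inclusions; conjugating the colimit system by these isomorphisms identifies every transition map with the single map $j_*\colon K_0(\Omega^n_{[0,\infty)}\cN)\to K_0(\Omega^n_{[-1,\infty)}\cN)$ induced by $[0,\infty)\subseteq[-1,\infty)$. Thus the lemma follows once $j_*$ is an isomorphism; and postcomposing $j$ with the translation isomorphism $K_0(\Omega^n_{[-1,\infty)}\cN)\xrightarrow{\cong}K_0(\Omega^n_{[0,\infty)}\cN)$, this is in turn equivalent to invertibility of the endomorphism $t_*$ of $K_0(\Omega^n_{[0,\infty)}\cN)$ induced by shifting each grading up by one. (Equivalently: $\bbK(\Omega^n\cN)$ is the mapping telescope of $t$, so the canonical map is an equivalence precisely when $t$ already acts invertibly.)

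\textbf{Invertibility of $t_*$.} Since $t$ is the composite of the $n$ commuting one-grading shifts, it is enough to treat each of them, and by the permutation symmetry of the construction we may assume we shift only the outermost grading. Using that $\Delta$ is split by $\top$, the group $K_0(\Omega^n_{[0,\infty)}\cN)$ is generated by classes of binary acyclic complexes supported in non-negative degrees over the exact categories occurring in the inner layers of the $\Omega$-construction, on which the relevant shift is the ordinary shift functor; so it suffices to compute $t_*$ on such a class $[C]$, $C=(P_*,d,d')$. The mapping cone $\operatorname{Cone}(\id_C)$ of the identity of $C$ is again binary acyclic and supported in non-negative degrees — both $(\operatorname{Cone}(\id_C),\delta)$ and $(\operatorname{Cone}(\id_C),\delta')$ are contractible — and fits into a short exact sequence $0\to C\to\operatorname{Cone}(\id_C)\to tC\to 0$. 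Hence $[tC]=[\operatorname{Cone}(\id_C)]-[C]$, and it remains to see that $[\operatorname{Cone}(\id_C)]=0$. This holds because $\operatorname{Cone}(\id_C)$ admits a degree-one endomorphism of square zero that is simultaneously a null-homotopy for $\delta$ and for $\delta'$; any binary complex carrying such a contraction has a finite filtration whose subquotients lie in the image of $\Delta$, and so represents the zero class (compare the relevant statements in \cite{Grayson2012}). Therefore $t_*=-\id$ in each shift direction, so $t_*=(-1)^n\id$ — in particular invertible — and the lemma follows.

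\textbf{Expected main obstacle.} The only non-formal input is the last point: the vanishing of $[\operatorname{Cone}(\id_C)]$, equivalently that regrading acts by $-1$ on these $K$-groups. Everything else — the filtered-colimit description of $K_0(\Omega^n\cN)$, the translation--conjugation bookkeeping, and the reduction from $n$ shift directions to one — is formal; the only mild technical nuisance is to carry the translation and colimit arguments compatibly through every layer of the iterated $\bfW^q$-construction.
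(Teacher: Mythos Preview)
Your proof is correct and follows essentially the same approach as the paper's: both reduce to a filtered colimit over the intervals $[-k,\infty)$ and then use that the shift acts invertibly (in fact by $\pm 1$, via the cone-of-the-identity argument, which is precisely the paper's \cref{lem:shifting}). The only organizational difference is that the paper handles $n>1$ by a short formal induction using the retract of $K_0(\Omega^n_{[0,\infty)}\cN)$ in $K_0(\Omega_{[0,\infty)}(B^q_{[0,\infty)})^{n-1}\cN)$, whereas you factor the $n$-fold shift as a composite of one-direction shifts and invoke the permutation symmetry together with the same retract; both routes are equivalent and rest on the same two inputs.
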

\begin{proof}
 We begin with the case $n=1$.
 The map $K_0(\Omega_{[0,\infty)}\cN) \to K_0(\Omega\cN)$ is an isomorphism since the class group of a filtered union is isomorphic to the colimit of the class groups and shifting induces an isomorphism in $K$-theory.
 
 We will now prove the lemma by induction. Assume that it holds for $n-1$.
 The map $K_0(\Omega^n_{[0,\infty)}\cN)\to K_0(\Omega\Omega^{n-1}_{[0,\infty)}\cN)$ is a retract of $K_0(\Omega_{[0,\infty)}(B^q_{[0,\infty)})^{n-1}\cN)\to K_0(\Omega(B^q_{[0,\infty)})^{n-1}\cN)$, which is an isomorphism by the induction beginning. Hence, $K_0(\Omega^n_{[0,\infty)}\cN)\to K_0(\Omega\Omega^{n-1}_{[0,\infty)}\cN)$ is an isomorphism as well. Using that $\Omega$ and $\Omega_{[0,\infty)}$ commute, it suffices to show that
 $K_0(\Omega^{n-1}_{[0,\infty)}\Omega\cN)\to K_0(\Omega^n\cN)$ is an isomorphism. This map is a retract of $K_0(\Omega^{n-1}_{[0,\infty)}B^q\cN)\to K_0(\Omega^{n-1}B^q\cN)$, which is an isomorphism by assumption.
\end{proof}

From now on, we write $K_0(\Omega\cN)$ for $K_0(\Omega_{[0,\infty)}\cN)$. All chain complexes considered in the sequel will be assumed to be positive.

In the remainder of this section, we record some important properties of $K_0(\Omega\cN)$.

\begin{definition}
 Let $\bP = (P_*, d, d')$ be a binary acyclic complex and let $i \in \IN$.
 \begin{enumerate}
  \item The \emph{$i$-th shift} $\bP[i]$ is defined to be the binary acyclic complex with underlying graded object $P[i]_* = P_{*-i}$ and differentials
   \[ \begin{tikzcd} P[i]_n = P_{n-i} \ar[r, shift right, "d_{n-i}'"']\ar[r, shift left, "d_{n-i}"] &[+10pt] P_{n-i-1} = P[i]_{n-1}\end{tikzcd} \]
  \item The \emph{$i$-th suspension} $\Sigma^i\bP$ is defined to be the binary acyclic complex with underlying graded object $\Sigma^iP_* = P_{*-i}$ and differentials
   \[ \begin{tikzcd} \Sigma^iP_n = P_{n-i} \ar[r, shift right, "(-1)^i d_{n-i}'"']\ar[r, shift left, "(-1)^i d_{n-i}"] &[+20pt] P_{n-i-1} = \Sigma^iP_{n-1}\end{tikzcd} \]
 \end{enumerate}
\end{definition}

\begin{remark}
 Our terminology is in disagreement with \cite{Grayson2012}, where the suspension is called a shift.
\end{remark}

As for ordinary chain complexes, we have the following lemma:

\begin{lemma}[{cf.~\cite[Lemma~6.1]{GraysonRelative} and \cite[Lemma~2.5]{HarrisPhD}}]\label{lem:shifting}
 Let $\bP$ be a binary acyclic complex. Then
 \[ [\bP[i]] = [\Sigma^i\bP] = (-1)^i[\bP] \in K_0(\Omega\cN). \]
\end{lemma}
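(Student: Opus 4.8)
The plan is to reduce the entire statement to the single identity $[\Sigma\bP]=-[\bP]$ in $K_0(\Omega\cN)$ and to prove that via the mapping cone of the identity. For the reduction, note first that $\bP[i]$ and $\Sigma^i\bP$ have the same underlying graded object $P_{*-i}$ and that their top (respectively bottom) differentials differ only by the overall sign $(-1)^i$; hence the degreewise morphism given by $(-1)^n\cdot\id$ when $i$ is odd, and by the identity when $i$ is even, is an isomorphism of binary acyclic complexes $\bP[i]\xrightarrow{\ \cong\ }\Sigma^i\bP$, so $[\bP[i]]=[\Sigma^i\bP]$. It thus suffices to prove $[\Sigma^i\bP]=(-1)^i[\bP]$, and since $\Sigma^{i-1}\bP$ is again a positive binary acyclic complex with $\Sigma^i\bP=\Sigma(\Sigma^{i-1}\bP)$, an induction on $i$ reduces this to the case $i=1$: for every binary acyclic complex $\bQ$ one has $[\Sigma\bQ]=-[\bQ]$.

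To prove this I would introduce the binary acyclic complex $\operatorname{Cone}(\id_\bQ)$, the mapping cone of the identity of $\bQ$ formed simultaneously with respect to both differentials: its degree $n$ part is $Q_{n-1}\oplus Q_n$, with top and bottom differentials
\[
\begin{pmatrix}-d & 0\\ \id & d\end{pmatrix}
\qquad\text{and}\qquad
\begin{pmatrix}-d' & 0\\ \id & d'\end{pmatrix} .
\]
Both underlying chain complexes are mapping cones of identity maps, hence contractible, hence acyclic, so this is indeed a binary acyclic complex. The inclusion of the second summand and the projection onto the first give a short exact sequence of binary acyclic complexes $\bQ\rightarrowtail\operatorname{Cone}(\id_\bQ)\twoheadrightarrow\Sigma\bQ$, whence $[\operatorname{Cone}(\id_\bQ)]=[\bQ]+[\Sigma\bQ]$; it remains to show that $[\operatorname{Cone}(\id_\bQ)]=0$.

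For this I would apply the degreewise change of basis $\left(\begin{smallmatrix}\id & d\\ 0 & \id\end{smallmatrix}\right)$ on $Q_{*-1}\oplus Q_*$. Afterwards the top differential becomes $\left(\begin{smallmatrix}0 & 0\\ \id & 0\end{smallmatrix}\right)$, so that with respect to it the complex splits as a direct sum $\bigoplus_k E_k$, where $E_k$ is the object $Q_k$ placed in two adjacent degrees and joined by the identity. A short computation shows that in these coordinates the bottom differential becomes block lower triangular relative to this decomposition with identity diagonal blocks (explicitly of the shape $\left(\begin{smallmatrix}e & \ast\\ \id & -e\end{smallmatrix}\right)$ with $e:=d-d'$), so in particular it never raises the index $k$ of $E_k$. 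Hence the partial sums $G_k:=\bigoplus_{j\le k}E_j$ form a filtration of $\operatorname{Cone}(\id_\bQ)$ by sub-binary-complexes whose successive quotients are the $E_k$, now with \emph{both} differentials equal to the identity; each $[E_k]$ vanishes, and additivity along the filtration gives $[\operatorname{Cone}(\id_\bQ)]=\sum_k[E_k]=0$.

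The only step with real content is this change-of-basis computation: one must check that the bottom differential acquires the claimed block-lower-triangular form with identity diagonal blocks, so that the $G_k$ are closed under it and the induced differentials on the subquotients coincide. The rest is sign bookkeeping together with the standard facts that contractible complexes and iterated extensions of acyclic complexes are acyclic. (One may also bypass the change of basis by simply \emph{defining} the relevant binary acyclic complex through the post-substitution formulae and verifying that $\bQ\rightarrowtail(-)\twoheadrightarrow\Sigma\bQ$ is short exact.)
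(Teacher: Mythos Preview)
Your argument is correct and follows the same route as the paper: establish $\bP[1]\cong\Sigma\bP$ for the first equality, and deduce $[\Sigma\bP]=-[\bP]$ from the short exact sequence $\bQ\rightarrowtail\operatorname{Cone}(\id_\bQ)\twoheadrightarrow\Sigma\bQ$. The paper simply asserts this sequence and defers to the cited references, whereas you spell out why the cone has class zero via the change of basis and the resulting filtration by diagonal two-term complexes; your matrix computation checks out and the filtration is indeed preserved by the bottom differential, with identity induced on each subquotient.
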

\begin{proof}
 The first equality holds since $\bP[1] \cong \Sigma\bP$. The second equality holds since $\bP$ and $\Sigma\bP$ fit into a short exact sequence with the cone of $\bP$.
\end{proof}

\begin{definition}
 A \emph{binary double complex} is a bounded bigraded object $(P_{k,l})_{k,l \in \bbN}$ in $\cN$ together with morphisms
 \[ d^h_{k,l} \colon P_{k,l} \to P_{k-1,l}, \quad d^v_{k,l} \colon P_{k,l} \to P_{k,l-1} \]
 and
 \[ d^{\prime,h}_{k,l} \colon P_{k,l} \to P_{k-1,l}, \quad d^{\prime,v}_{k,l} \colon P_{k,l} \to P_{k,l-1} \]
 such that $(P_{*,*}, d^h, d^v)$ and $(P_{*,*}, d^{\prime,h}, d^{\prime,v})$ are double complexes in the sense that
 $(P_{*,l}, d^h)$ and $(P_{*,l},d^{\prime,h})$ are chain complexes for all $l$, $(P_{k,*},d^v)$ and $(P_{k,*},d^{\prime,v})$ are chain complexes for all $k$,
 and $d^hd^v =d^vd^h$, respectively $d^{\prime,h}d^{\prime,v} = d^{\prime,v}d^{\prime,h}$.
 
 We call $(P_{*,*}, d^h, d^v, d^{\prime,h}, d^{\prime,v})$ a \emph{binary acyclic double complex} if $(P_{*,l}, d^h, d^{\prime,h})$ is a binary acyclic complex for all $l$
 and $(P_{k,*}, d^v, d^{\prime,v})$ is a binary acyclic complex for all $k$.
\end{definition}

Let $(P_{*,*}, d^h, d^v, d^{\prime,h}, d^{\prime,v})$ be a binary acyclic double complex.
Forming the total complex of $(P_{*,*}, d^h, d^v)$ and $(P_{*,*}, d^{\prime,h}, d^{\prime,v})$, using the usual sign trick, produces a binary acyclic complex $\bT$.
Filtering $\bT$ according to the horizontal (respectively vertical) filtration of the double complexes and applying \cref{lem:shifting}
immediately gives the following lemma.

\begin{lemma}[{Nenashev's relation, cf.~\cite[Remark~8.1]{Grayson2012} and \cite[Proposition~2.10]{HarrisPhD}}]\label{lem:nena}
 Let $(P_{*,*}, d^h, d^v, d^{\prime,h}, d^{\prime,v})$ be a binary acyclic double complex.
 Then we have
 \[ \sum_l (-1)^l [P_{*,l}, d^h, d^{\prime,h}] = \sum_k (-1)^k [P_{k,*}, d^v, d^{\prime,v}] \]
 in $K_0(\Omega_{\supp \bT}\cN)$.
\end{lemma}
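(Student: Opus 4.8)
The plan is to introduce the totalisation $\bT$ explicitly, verify that it is a binary acyclic complex, and then compute its class $[\bT] \in K_0(\Omega_{\supp\bT}\cN)$ in two different ways, once via the row filtration of the double complex and once via the column filtration.

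Concretely, set $T_n := \bigoplus_{k+l=n} P_{k,l}$ and define two degree $-1$ maps by $d := d^h + (-1)^k d^v$ and $d' := d^{\prime,h} + (-1)^k d^{\prime,v}$ on the summand $P_{k,l}$, using the \emph{same} sign on the vertical parts of both. The usual sign computation shows that $(T_*,d)$ and $(T_*,d')$ are chain complexes. To see that they are acyclic, filter $(T_*,d)$ by the degreewise split subobjects $F_p T_* := \bigoplus_{l \le p} P_{k,l}$; since $d^v$ strictly decreases $l$, each $F_p T_*$ is a subcomplex, and the subquotient $F_p T_*/F_{p-1} T_*$ is the $p$-th shift of the row $(P_{*,p}, d^h)$ (up to a global sign on the differential), which is acyclic by hypothesis. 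As acyclicity of complexes is closed under extension, $(T_*,d)$ is acyclic; the same applies to $(T_*,d')$, because the $F_p T_*$ are simultaneously $d'$-subcomplexes. Hence $\bT := (T_*, d, d')$ is a binary acyclic complex, and $F_\bullet$ is a finite filtration of $\bT$ by binary acyclic subcomplexes whose subquotients are, up to an isomorphism correcting signs, the binary acyclic complexes $\Sigma^p(P_{*,p}, d^h, d^{\prime,h})$.

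Additivity of the class in $K_0(\Omega_{\supp\bT}\cN)$ along the conflations $F_{p-1}\bT \rightarrowtail F_p\bT \twoheadrightarrow F_p\bT/F_{p-1}\bT$, together with \cref{lem:shifting}, then gives
\[ [\bT] = \sum_l [\Sigma^l(P_{*,l}, d^h, d^{\prime,h})] = \sum_l (-1)^l [P_{*,l}, d^h, d^{\prime,h}]. \]
Filtering instead by columns, $G_q T_* := \bigoplus_{k\le q} P_{k,l}$ is a subcomplex for both differentials because $d^h$ strictly decreases $k$, and its subquotients are $\Sigma^q(P_{q,*}, d^v, d^{\prime,v})$ up to sign, so the identical reasoning yields
\[ [\bT] = \sum_k (-1)^k [P_{k,*}, d^v, d^{\prime,v}]. \]
Comparing the two expressions for $[\bT]$ proves the lemma. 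The only genuine points requiring care are bookkeeping: choosing the totalisation signs so that $F_\bullet$ and $G_\bullet$ are subcomplexes for $d$ and $d'$ simultaneously (achieved by a sign depending only on the horizontal degree), and checking that each filtration step is an honest conflation in the exact category of binary acyclic complexes, i.e.\ that both sub and quotient are again binary acyclic — which is precisely the closure of acyclicity under extensions invoked above.
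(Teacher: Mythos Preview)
Your proof is correct and follows exactly the approach sketched in the paper: form the total complex $\bT$ with the standard sign trick, filter it once by rows and once by columns, identify the successive subquotients as suspensions of the rows (respectively columns), and apply \cref{lem:shifting} together with additivity along the resulting conflations in $B^q\cN$. You have simply made explicit the details the paper leaves implicit, including the verification that the filtration stages are themselves binary acyclic and that the chosen sign convention makes both $F_\bullet$ and $G_\bullet$ simultaneous subcomplexes for $d$ and $d'$.
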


This relation is analogous to the relation used by Nenashev \cite{Nenashev1998} to define $K^N_1(\cN)$, hence its name.

\begin{remark}\label{rem:binarydouble_notation}
	Specifying a binary double complex involves a sizeable amount of data.
	In order to write down such complexes without occupying too much space, we will follow Nenashev's convention and depict binary double complexes by diagrams of the form
	\[\begin{tikzcd}
	\bullet\ar[r, shift right]\ar[r, shift left]\ar[d, shift right]\ar[d, shift left] & \bullet\ar[r, shift right]\ar[r, shift left]\ar[d, shift right]\ar[d, shift left] & \bullet\ar[d, shift right]\ar[d, shift left] \\
	\bullet\ar[r, shift right]\ar[r, shift left]\ar[d, shift right]\ar[d, shift left] & \bullet\ar[r, shift right]\ar[r, shift left]\ar[d, shift right]\ar[d, shift left] & \bullet\ar[d, shift right]\ar[d, shift left] \\
	\bullet\ar[r, shift right]\ar[r, shift left] & \bullet\ar[r, shift right]\ar[r, shift left] & \bullet
	\end{tikzcd} \]
	where it is understood that the left vertical morphisms commute with the top horizontal morphisms (corresponding to $d^h$ and $d^v$),
	and that the right vertical morphisms commute with the bottom horizontal morphisms (corresponding to $d^{\prime,h}$ and $d^{\prime,v}$).
\end{remark}

\begin{lemma}\label{lem:ordertwo}
 Let $J$ be an object in $\cN$ and denote by
 \[ \tau_J := \begin{pmatrix} 0 & \id_J \\ \id_J & 0 \end{pmatrix} \colon J \oplus J \to J \oplus J \]
 the automorphism which switches the two summands.
 Then the element
 \[ [ \begin{tikzcd} J\oplus J\ar[r, shift left, "\id"]\ar[r, shift right, "\tau_J"']&[-5pt] J\oplus J\end{tikzcd} ] \in K_0(\Omega_{[0,2]}\cN) \]
 has order two.
\end{lemma}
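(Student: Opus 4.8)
The plan is to derive the relation $2[\bP]=0$, where $\bP$ denotes the binary acyclic complex in the statement, from a single application of Nenashev's relation \cref{lem:nena}. (Morally, $[\bP]$ records the automorphism $\tau_J$, which is $2$-torsion because $\tau_J^2=\id$; the double complex below makes this precise inside Grayson's binary-complex formalism.) Write $A=J\oplus J$ and consider the binary acyclic double complex
\[
\begin{tikzcd}
A \ar[r, shift left, "\id"] \ar[r, shift right, "\id"'] \ar[d, shift left, "\tau_J"] \ar[d, shift right, "\id"'] & A \ar[d, shift left, "\id"] \ar[d, shift right, "\id"'] \\
A \ar[r, shift left, "\id"] \ar[r, shift right, "\tau_J"'] & A
\end{tikzcd}
\]
in which all four objects are $A$, every differential $d^h,d^v$ of the underlying ``top'' double complex is $\id_A$, and every differential $d^{\prime,h},d^{\prime,v}$ of the ``bottom'' double complex is $\id_A$ except for the two differentials forming one of the two directed paths of length two from the degree-$2$ corner to the degree-$0$ corner, both of which are set equal to $\tau_J$.

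First I would check that this really is a binary acyclic double complex. The ``top'' square commutes because all of its edges are identities, and the ``bottom'' square commutes because the distinguished path composes to $\tau_J\circ\tau_J$ while the complementary path composes to $\id\circ\id$ — these agree precisely because $\tau_J^2=\id_A$, and this is the only point at which the special form of $\tau_J$ is used. Each of the two rows and each of the two columns is a complex of length one whose two differentials are isomorphisms, hence binary acyclic. Consequently the total complex is a binary acyclic complex supported on $\{0,1,2\}$, so \cref{lem:nena} yields an identity in $K_0(\Omega_{[0,2]}\cN)$.

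It remains to read off that identity. As binary acyclic complexes, exactly one of the two rows equals $\bP=(A\xrightarrow{\id,\tau_J}A)$ and the other equals $\Delta(A\xrightarrow{\id}A)$, and likewise for the two columns; a binary acyclic complex whose top and bottom differentials coincide lies in the image of $\Delta$ and therefore represents $0$ in $K_0(\Omega_{[0,2]}\cN)$, just as in the description of $K_1(\cN)\cong K_0(\Omega\cN)$ recalled in \cref{sec:complexes}. The two copies of $\bP$ occur in a row and a column of opposite parity — this is precisely why one places the two $\tau_J$'s along a path rather than, say, as the two differentials landing in the degree-$0$ corner — so the corresponding signs in \cref{lem:nena} do not cancel and the identity collapses to $[\bP]=-[\bP]$ in $K_0(\Omega_{[0,2]}\cN)$, whence $2[\bP]=0$. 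I do not expect a genuine obstacle: the only substantive input is $\tau_J^2=\id_A$, and the points to watch are the sign bookkeeping just mentioned and the (routine) observation that $\Delta$-complexes already vanish in $K_0(\Omega_{[0,2]}\cN)$ and not merely in $K_0(\Omega\cN)$.
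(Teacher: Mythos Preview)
Your proof is correct and uses essentially the same device as the paper: apply Nenashev's relation (\cref{lem:nena}) to a $2\times 2$ binary acyclic double complex that only commutes because $\tau_J^2=\id$. Your choice of double complex is arranged so that one row and one column equal $\bP$ while the other row and column are diagonal, yielding $[\bP]=-[\bP]$ in a single step. The paper instead uses a double complex from which Nenashev's relation gives $[\,\id/\tau_J\,]=-[\,\tau_J/\id\,]$ and then invokes a separate explicit isomorphism of binary complexes to obtain $[\,\id/\tau_J\,]=[\,\tau_J/\id\,]$; combining the two gives order two. So the approaches are the same in spirit, with yours avoiding the auxiliary isomorphism step by a slightly more economical placement of the $\tau_J$'s.
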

\begin{proof}
 An application of \cref{lem:nena} to the binary acyclic double complex
 \[\begin{tikzcd}
   J \oplus J\ar[r, shift left, "\tau_J"]\ar[r, shift right, "\id_{J\oplus J}"']\ar[d, shift left, "\tau_J"]\ar[d, shift right, "\tau_J"'] & J \oplus J\ar[d, shift left, "\id_{J\oplus J}"]\ar[d, shift right, "\tau_J"'] \\
   J \oplus J\ar[r, shift left, "\tau_J"]\ar[r, shift right, "\tau_J"'] & J \oplus J
  \end{tikzcd}\]
 shows that
 \[
 [\begin{tikzcd}
  J\oplus J\ar[r, shift left, "\id_{J\oplus J}"]\ar[r, shift right, "\tau_J"']&[-5pt] J\oplus J\end{tikzcd}]=-[\begin{tikzcd}J\oplus J\ar[r, shift left, "\tau_J"]\ar[r, shift right, "\id_{J\oplus J}"']&[-5pt] J\oplus J
  \end{tikzcd}]\]
 in $K_0(\Omega_{[0,2]}\cN)$; cf.\ also \cite[Corollary~8.7]{GraysonRelative}.
 On the other hand, the isomorphism
  \[\begin{tikzcd}
  J \oplus J\ar[r, shift left, "\tau_J"]\ar[r, shift right, "\id_{J\oplus J}"']\ar[d, "\tau_J"'] & J \oplus J\ar[d, "\id_{J\oplus J}"] \\
  J \oplus J\ar[r, shift left, "\id_{J\oplus J}"]\ar[r, shift right, "\tau_J"'] & J \oplus J
  \end{tikzcd}\]
  of binary acyclic complexes implies
  \[[\begin{tikzcd}
  	J\oplus J\ar[r, shift left, "\id_{J\oplus J}"]\ar[r, shift right, "\tau_J"']&[-5pt] J\oplus J\end{tikzcd}]=[\begin{tikzcd}J\oplus J\ar[r, shift left, "\tau_J"]\ar[r, shift right, "\id_{J\oplus J}"']&[-5pt] J\oplus J
  \end{tikzcd}]\in K_0(\Omega_{[0,2]}\cN).\qedhere\]
\end{proof}

\section{Shortening binary complexes}\label{sec:shortening}

The goal of this section is to prove \cref{thm:dim4} and \cref{thm:dim7}. As before, $\cN$ denotes an exact category.
The basic approach is the same as that of Harris \cite[Section~2.2]{HarrisPhD} in showing that the canonical map from Bass' $K_1$ to $K_0(\Omega\cN)$ is an isomorphism for split-exact categories.
Our arguments rely on a description of equality of classes in $K_0$ of an exact category which is due to Heller \cite[Lemma~2.1]{Heller1965}.
We include a proof following \cite[Lemma 2.4]{thomason} for the reader's convenience.

\begin{definition}
 Let $J,K \in \cN$.
 \begin{enumerate}
  \item We call $J$ and $K$ \emph{extension-equivalent} if there are objects $A, B \in \cN$ such that there exist exact sequences
    \[
     \begin{tikzcd}
      A\ar[r, rightarrowtail] & J\ar[r, twoheadrightarrow] & B
     \end{tikzcd}
    \quad\text{and}\quad
    \begin{tikzcd}
     A\ar[r, rightarrowtail] & K\ar[r, twoheadrightarrow] & B.
    \end{tikzcd}
 \]
 \item We call $J$ and $K$ \emph{stably extension-equivalent} if there exists an object $S \in \cN$ such that $J \oplus S$ and $K \oplus S$ are extension-equivalent.
 \end{enumerate}
\end{definition}

Despite its name, extension-equivalence need not be an equivalence relation.
On the other hand, the following lemma shows that stable extension-equivalence is always an equivalence relation.

\begin{lemma}[Heller]\label{lem:hellerscriterion}
 Let $\cN$ be an exact category and let $J, J', K, K' \in \cN$.
 
 Then $[J] - [J'] = [K] - [K'] \in K_0(\cN)$ if and only if $J \oplus K'$ and $K \oplus J'$ are stably extension-equivalent.
\end{lemma}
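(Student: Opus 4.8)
The plan is to reduce the statement to the familiar description of the Grothendieck group $K_0(\cN)$ as the free abelian group on isomorphism classes of objects modulo the relations $[B] = [A] + [C]$ coming from admissible short exact sequences $A \rightarrowtail B \twoheadrightarrow C$. The ``if'' direction is immediate and should be disposed of first: if $J \oplus S$ and $K \oplus J' \oplus S$ (after adding a stabilizing object) fit into short exact sequences with common sub- and quotient objects, then $[J \oplus K' \oplus S] = [K \oplus J' \oplus S]$ in $K_0(\cN)$ by definition of the relations, and cancelling $[S]$ and rearranging gives $[J] - [J'] = [K] - [K']$. More precisely, I would first observe that it suffices to treat the case $J' = K' = 0$, i.e.\ to show that $[L] = [M] \in K_0(\cN)$ if and only if $L$ and $M$ are stably extension-equivalent (apply this with $L = J \oplus K'$, $M = K \oplus J'$, noting that $[J] - [J'] = [K] - [K']$ is equivalent to $[J \oplus K'] = [K \oplus J']$).

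The substance is the ``only if'' direction. The standard argument (following Thomason) is to consider the set $\Gamma$ of isomorphism classes of objects of $\cN$ and define a relation on the free abelian monoid $\mathbb{N}[\Gamma]$, or better, to build $K_0(\cN)$ as a quotient monoid and track when two elements become equal. Concretely, I would introduce on isomorphism classes the relation $L \sim M$ meaning ``$L$ and $M$ are stably extension-equivalent'', show it is an equivalence relation (reflexivity via $0 \rightarrowtail L \twoheadrightarrow L$ or $L \rightarrowtail L \twoheadrightarrow 0$; symmetry is built in; transitivity is the key point and uses that given $A \rightarrowtail L \twoheadrightarrow B$ and $A' \rightarrowtail L \twoheadrightarrow B'$ one can, after stabilizing, compare them — this is where a Noether-type isomorphism or a pullback/pushout construction in the exact category is invoked), and that it is compatible with direct sum, so the quotient $\mathbb{N}[\Gamma]/\!\sim$ is an abelian monoid. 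Then I would check that this monoid is already a group: the class of $L$ has inverse the class of any ``complement'' appearing in a short exact sequence whose total term is stably trivial, but more simply one shows every element is invertible because $[L] + [\text{cone-type object}]$ is trivial; alternatively, one verifies directly that the monoid quotient satisfies the universal property of $K_0$. Since $K_0(\cN)$ is the group completion (here already a group) of $\mathbb{N}[\Gamma]$ modulo the short-exact-sequence relations, and stable extension-equivalence visibly generates exactly those relations, the natural map $\mathbb{N}[\Gamma]/\!\sim \; \to K_0(\cN)$ is an isomorphism, which is precisely the claim.

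The step I expect to be the main obstacle is verifying transitivity of stable extension-equivalence, equivalently that the relation generated by single short exact sequences is already (stably) an equivalence relation without needing to compose chains of them. This is exactly Heller's lemma and the reason stabilization by an object $S$ is unavoidable: given two extensions of $L$, one forms their Baer-type sum or a suitable pullback square in the exact category, and the relevant $S$ absorbs the ambiguity; making this precise requires a small diagram chase with admissible monos and epis and an appeal to the closure of these classes under pullback/pushout along admissible maps. I would carry this out as a self-contained sublemma, then assemble the pieces as above. The remaining verifications (well-definedness on $\mathbb{N}[\Gamma]$, additivity, that the quotient is a group, identification with $K_0$) are routine once transitivity is in hand.
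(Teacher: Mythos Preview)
Your overall strategy---build an explicit quotient and identify it with $K_0(\cN)$---is the same as the paper's, but there is a genuine gap in your execution. The monoid of isomorphism classes of objects modulo stable extension-equivalence is \emph{not} a group in general: already for finite-dimensional vector spaces over a field this monoid is $\mathbb{N}$, since no nonzero object can acquire an additive inverse. So the step ``check that this monoid is already a group'' fails, and with it your proposed identification with $K_0$. The paper avoids this by working from the outset with \emph{pairs} of objects: one declares $(J,J') \sim (K,K')$ iff $J \oplus K'$ and $K \oplus J'$ are stably extension-equivalent, and then the set of equivalence classes $k(\cN)$ is automatically a group because $[J,J'] + [J',J] = [J \oplus J', J' \oplus J] = 0$. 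Your reduction to the case $J' = K' = 0$ discards exactly the data that makes the group structure come for free. (Your route is salvageable: the monoid is cancellative---immediate from the definition of stable extension-equivalence---so it embeds in its group completion, and that group completion receives a map from $K_0$; but unwinding this you have essentially rebuilt the paper's pairs.)

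A secondary point: you overestimate the difficulty of transitivity. No pullbacks, pushouts, or Baer sums are needed. If
\[ A \rightarrowtail L \oplus S \twoheadrightarrow B,\quad A \rightarrowtail M \oplus S \twoheadrightarrow B,\quad C \rightarrowtail M \oplus T \twoheadrightarrow D,\quad C \rightarrowtail N \oplus T \twoheadrightarrow D \]
witness $L \sim M$ and $M \sim N$, then the direct sum of the first with the third sequence and of the second with the fourth exhibits $L \oplus (M \oplus S \oplus T)$ and $N \oplus (M \oplus S \oplus T)$ as extension-equivalent; the intermediate object $M$ is simply absorbed into the stabilizer. The paper carries out exactly this argument (for pairs), and it is the only nontrivial verification.
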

\begin{proof}
 Define a relation on pairs of objects in $\cN$ by setting $(J,J') \sim (K,K')$ if and only if $J \oplus K'$ and $K \oplus J'$ are stably extension-equivalent.
 
 We claim that $\sim$ is an equivalence relation.
 Reflexivity and symmetry are obvious.
 To see transitivity, suppose that $(J,J') \sim (K,K') \sim (L,L')$, i.e.\ there exist $A$, $B$, $C$, $D$, $S$, $T \in \cN$ such that there are exact sequences
 \[
   \begin{tikzcd}
	A\ar[r, rightarrowtail] & J \oplus K' \oplus S\ar[r, twoheadrightarrow] & B
	\end{tikzcd}
   \quad\text{and}\quad
    \begin{tikzcd}
	A\ar[r, rightarrowtail] & K \oplus J' \oplus S\ar[r, twoheadrightarrow] & B
	\end{tikzcd} \]
 as well as
 \[
   \begin{tikzcd}
	C\ar[r, rightarrowtail] & K \oplus L' \oplus T\ar[r, twoheadrightarrow] & D
	\end{tikzcd}
   \quad\text{and}\quad
    \begin{tikzcd}
	C\ar[r, rightarrowtail] & L \oplus K' \oplus T\ar[r, twoheadrightarrow] & D.
	\end{tikzcd}
 \]
 Then the sequences formed by taking direct sums
 \[
   \begin{tikzcd}
	A \oplus C\ar[r, rightarrowtail] & J \oplus K' \oplus S \oplus K \oplus L' \oplus T\ar[r, twoheadrightarrow] & B \oplus D,
	\end{tikzcd}
 \]
 \[
   \begin{tikzcd}
	A \oplus C\ar[r, rightarrowtail] & K \oplus J' \oplus S \oplus L \oplus K' \oplus T\ar[r, twoheadrightarrow] & B \oplus D
   \end{tikzcd}
 \]
 are exact, too. Rewriting
 \[ J \oplus K' \oplus S \oplus K \oplus L' \oplus T \cong J \oplus L' \oplus K \oplus K' \oplus S \oplus T \]
 and
 \[ K \oplus J' \oplus S \oplus L \oplus K' \oplus T \cong L \oplus J' \oplus K \oplus K' \oplus S \oplus T \]
 proves transitivity, so $\sim$ is an equivalence relation.
 Denote by $k(\cN)$ the set of equivalence classes in $ob\cN \times ob\cN$ with respect to $\sim$.
 We write $[J,J']$ for the class of $(J,J')$ in $k(\cN)$.
 
 Clearly, if $(J,J')$ and $(K,K')$ are pairs of objects such that $J \cong K$ and $J' \cong K'$, then $[J,J'] = [K,K']$.
 Hence, the direct sum operation in $\cN$ induces the structure of a commutative monoid on $k(\cN)$ via
 \[ [J,J'] + [K,K'] := [J \oplus K, J' \oplus K']. \]
 It is easy to check that $[J,J] = [0,0]$ for every object $J \in \cN$, so $k(\cN)$ is an abelian group since
 \[ [J,J'] + [J',J] = [J \oplus J', J \oplus J'] = 0. \]
 Let now $J \rightarrowtail K \twoheadrightarrow L$ be an exact sequence in $\cN$. Since both
 \[
  \begin{tikzcd}
	J\ar[r, rightarrowtail] & J \oplus L\ar[r, twoheadrightarrow] & L.
  \end{tikzcd}
 \quad\text{and}\quad
  \begin{tikzcd}
	J \ar[r, rightarrowtail] & K\ar[r, twoheadrightarrow] & L
  \end{tikzcd}
 \]
 are exact, it follows that $[J \oplus L, 0] = [K,0]$.
 Hence, the map $ob\cN \to k(\cN), J \mapsto [J,0]$ induces a homomorphism $\phi \colon K_0(\cN) \to k(\cN)$.
 
 Note that $\phi$ sends the class $[J] - [J'] \in K_0(\cN)$ to $\phi([J] - [J']) = [J,J']$, so $\phi$ is an epimorphism.
 Moreover, it is immediate from the definition of $\sim$ that the kernel of $\phi$ is trivial.
 This proves that $\phi$ is an isomorphism, and the claim of the lemma follows.
\end{proof}

We can now prove \cref{thm:dim4}. Let $\bP:=(P_*, d, d')$ be a binary acyclic complex supported on $[0,m]$ for some $m\in\bbN$.
Choose factorizations $d_n \colon P_n \twoheadrightarrow J_{n-1} \rightarrowtail P_{n-1}$ and $d_n' \colon P_n \twoheadrightarrow K_{n-1} \rightarrowtail P_{n-1}$ for all $n$.
Since $J_n$ and $K_n$ both fit into an exact sequence with $P_{n-1},\ldots, P_0$, they represent the same class in $K_0(\cN)$.
Therefore, there exist $A_n,B_n,S_n\in\cN$ and exact sequences 
\[\begin{tikzcd}
 A_n\ar[r, rightarrowtail] & J_n \oplus S_n\ar[r, twoheadrightarrow] & B_n
 \end{tikzcd}
 \quad\text{and}\quad
 \begin{tikzcd}
 A_n\ar[r, rightarrowtail] & K_n \oplus S_n\ar[r, twoheadrightarrow] & B_n.
 \end{tikzcd}\]
 For $n \geq 3$, let $\bS_n$ denote the binary acyclic complex
 \[\begin{tikzcd}[column sep = small]
 A_n\ar[r, shift left]\ar[r, shift right] & K_n \oplus S_n \oplus J_n\ar[r, shift left]\ar[r, shift right]&B_n\oplus P_n\oplus A_{n-1}\ar[r, shift left]\ar[r, shift right]&J_{n-1}\oplus K_{n-1} \oplus S_{n-1}\ar[r, shift left]\ar[r, shift right]&B_{n-1}
 \end{tikzcd}\]
 consisting of top differential
 \[\begin{tikzcd}[row sep= tiny]
  A_n\ar[r, rightarrowtail] & K_n \oplus S_n\ar[r, twoheadrightarrow]\ar[d, phantom, "\oplus"] & B_n\ar[d, phantom, "\oplus"] & & \\
  & J_n\ar[r, rightarrowtail] & P_n\ar[r, twoheadrightarrow, "d_n"]\ar[d, phantom, "\oplus"] & J_{n-1}\ar[d, phantom, "\oplus"] & \\
  & & A_{n-1}\ar[r, rightarrowtail] & K_{n-1}\oplus S_{n-1}\ar[r, twoheadrightarrow] & B_{n-1}
\end{tikzcd}\]
 and bottom differential
 \[\begin{tikzcd}[row sep= tiny]
  A_n\ar[r, rightarrowtail] & J_n\oplus S_n\ar[r, twoheadrightarrow]\ar[d, phantom, "\oplus"]& B_n\ar[d, phantom, "\oplus"]&&\\
  & K_n\ar[r, rightarrowtail] & P_n\ar[r, twoheadrightarrow, "d_n'"]\ar[d, phantom, "\oplus"]& K_{n-1}\ar[d, phantom, "\oplus"]&\\
  & & A_{n-1}\ar[r, rightarrowtail] & J_{n-1} \oplus S_{n-1}\ar[r, twoheadrightarrow]& B_{n-1}.
  \end{tikzcd}\] 
 Note that $\bS_n$ is zero for almost all $n$.
Furthermore, let $\bS_2$ denote the binary acyclic complex
\[\begin{tikzcd}
A_2\ar[r, shift left]\ar[r, shift right]&K_2\oplus S_2\oplus J_2\ar[r, shift left]\ar[r, shift right]&B_2\oplus P_2\ar[r, shift left]\ar[r, shift right]&P_1\ar[r, shift left]\ar[r, shift right]&P_0
\end{tikzcd}\]
consisting of top differential
\[\begin{tikzcd}[row sep= tiny]
 A_2\ar[r, rightarrowtail]&K_2\oplus S_2\ar[r, twoheadrightarrow]\ar[d, phantom, "\oplus"]& B_2\ar[d, phantom, "\oplus"]&&\\
 &J_2\ar[r, rightarrowtail]&P_2\ar[r, "d_2"]& P_1\ar[r, twoheadrightarrow, "d_1"]&P_0
\end{tikzcd}\]
and bottom differential
\[\begin{tikzcd}[row sep= tiny]
 A_2\ar[r, rightarrowtail]&J_2\oplus S_2\ar[r, twoheadrightarrow]\ar[d, phantom, "\oplus"]& B_2\ar[d, phantom, "\oplus"]&&\\
 &K_2\ar[r, rightarrowtail]&P_2\ar[r, "d_2'"]& P_1\ar[r, twoheadrightarrow, "d_1'"]&P_0.
\end{tikzcd}\]

\begin{lemma}
	\label{lem:dim4}
	\[[\bP]=\sum_{n=2}^{\infty}(-1)^n[\bS_n]\in K_0(\Omega_{[0,m+3]} \cN)\]
\end{lemma}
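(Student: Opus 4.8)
The plan is to realise both $[\bP]$ and the alternating sum $\sum_{n\ge 2}(-1)^n[\bS_n]$ as the two iterated sums of classes attached to a single binary acyclic double complex, and then to invoke \cref{lem:nena}. Before doing so I would use \cref{lem:shifting} to absorb the signs: since $[\bS_n[1]]=-[\bS_n]$, the assertion is equivalent to $[\bP]=\bigl[\bigoplus_{n\text{ even}}\bS_n\oplus\bigoplus_{n\text{ odd}}\bS_n[1]\bigr]$, and the shift by one on the odd-indexed complexes is exactly what raises the top of the support from $m+2$ to $m+3$, explaining the interval appearing in the statement.

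The double complex $\mfD$ (drawn in the style of \cref{rem:binarydouble_notation}) has as its $n$-th column the complex $\bS_n$ for $n\ge 2$, placed so that the central term $B_n\oplus P_n\oplus A_{n-1}$ of every $\bS_n$ lies in one fixed row; the remaining columns are zero and the vertical differentials are those of the $\bS_n$. The horizontal differentials are assembled from the data that consecutive $\bS_n$ have in common: the acyclicity sequences $J_n\rightarrowtail P_n\twoheadrightarrow J_{n-1}$ and $K_n\rightarrowtail P_n\twoheadrightarrow K_{n-1}$, and the two Heller sequences $A_n\rightarrowtail J_n\oplus S_n\twoheadrightarrow B_n$, $A_n\rightarrowtail K_n\oplus S_n\twoheadrightarrow B_n$ furnished by \cref{lem:hellerscriterion}, each of which appears once inside $\bS_n$ and once inside $\bS_{n+1}$. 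Choosing the bidegrees so that the two occurrences coincide and gluing them by identities makes $\mfD$ into a binary acyclic double complex; note that on the $P$-summands the horizontal differentials $d^h$, $d^{\prime,h}$ then become the composites $P_{n+1}\twoheadrightarrow J_n\rightarrowtail P_n$ and $P_{n+1}\twoheadrightarrow K_n\rightarrowtail P_n$, i.e.\ the differentials $d$ and $d'$ of $\bP$. Consequently the row carrying the $P_n$ is isomorphic to $\bP$ together with a binary acyclic complex whose two differentials coincide, while every other row is a split acyclic complex with coinciding top and bottom differential; all the latter rows contribute trivially, so \cref{lem:nena} gives $[\bP]=\sum_l(-1)^l[\mathrm{row}_l]=\sum_k(-1)^k[\mathrm{column}_k]=\sum_{n\ge 2}(-1)^n[\bS_n]$.

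The main obstacle is pinning down $\mfD$: one has to organise the summands $A_n$, $B_n$, $S_n$, $J_n$, $K_n$ bidegree by bidegree so that the strict commutation relations $d^hd^v=d^vd^h$ and $d^{\prime,h}d^{\prime,v}=d^{\prime,v}d^{\prime,h}$ hold, so that every row is acyclic, and --- the delicate requirement --- so that no row except the $P_n$-row acquires a non-trivial binary structure. The last point typically forces a stabilisation of the Heller data, replacing each pair $(A_n,S_n)$ by $(A_n\oplus T_n,\,S_n\oplus T_n)$ for auxiliary objects $T_n$ chosen inductively so that the leftover rows split off as direct sums of elementary complexes $X\xrightarrow{\ \mathrm{id}\ }X$ placed in adjacent degrees. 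Once $\mfD$ is fixed, matching up the two iterated sums of classes is routine bookkeeping with \cref{lem:shifting} and the vanishing of the class of any binary acyclic complex whose two differentials agree.
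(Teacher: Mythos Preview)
Your strategy has a genuine gap: the double complex $\mfD$ is never actually written down, and the two organising principles you impose on it are incompatible.  You require that every $\bS_n$ sit as a column with its central term $B_n\oplus P_n\oplus A_{n-1}$ in one fixed row, and simultaneously that the data shared between $\bS_n$ and $\bS_{n+1}$ be glued by identities.  But in the top differential the Heller sequence $A_n\rightarrowtail K_n\oplus S_n\twoheadrightarrow B_n$ occupies the top three slots of $\bS_n$ and the bottom three slots of $\bS_{n+1}$; with the centres aligned these occurrences are two rows apart, so no horizontal identity can connect them.  Without that gluing there is no natural horizontal differential at all: in the would-be $\bP$-row the entries are $B_n\oplus P_n\oplus A_{n-1}$, and there is no map $A_{n-1}\to B_{n-1}\oplus P_{n-1}\oplus A_{n-2}$ in sight that could make the row acyclic after projecting away the $P$-summands.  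Your stabilisation idea $(A_n,S_n)\leadsto(A_n\oplus T_n,S_n\oplus T_n)$ does not touch this obstruction, because the unmatched $A_{n-1}$ and $B_n$ in the central row are not affected by enlarging $S_n$.

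The paper avoids this difficulty by working iteratively rather than all at once.  It introduces an auxiliary complex $\bP'$ which agrees with $\bP$ above degree~$2$ but whose bottom three terms are $P_3\oplus A_2$, $J_2\oplus K_2\oplus S_2$, $B_2$, and it constructs a single small explicit binary acyclic double complex whose only non-diagonal column is $\bS_2$ and whose rows are (up to diagonal summands) $\bP$, $\bP'[1]$, and one auxiliary length-two row $P_1\rightrightarrows P_1\oplus P_0\rightrightarrows P_0$.  That auxiliary row is shown to be trivial by an evident short exact sequence, so \cref{lem:nena} and \cref{lem:shifting} give $[\bP]=[\bS_2]-[\bP']$.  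Since $\bP'$ has length one less than $\bP$ and the same shape at the bottom (with indices shifted up by one), the next application of the same small diagram to $\bP'$ produces exactly $\bS_3$, and the identity follows by induction.  If you want to salvage your one-shot idea, the honest thing to do is to stagger the columns---place $\bS_n$ in column $n$ shifted up by $n-2$---but then no single row carries $\bP$, and you are effectively reconstructing the paper's iteration in disguise.
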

\begin{proof}
	Let $\bP'$ denote the binary acyclic complex
	\[\begin{tikzcd}
	\ldots\ar[r, shift left]\ar[r, shift right]&P_4\ar[r, shift left]\ar[r, shift right]&P_3\oplus A_2\ar[r, shift left]\ar[r, shift right]&J_2\oplus K_2\oplus S_2\ar[r, shift left]\ar[r, shift right]&B_2
	\end{tikzcd}\]
	with top differential
	\[\begin{tikzcd}[row sep= tiny]
	\ldots\ar[r]&P_4\ar[r, "d_4"]&P_{3}\ar[r, twoheadrightarrow, "d_{3}"]\ar[d, phantom, "\oplus"]&J_2\ar[d, phantom, "\oplus"]&\\
	&&A_{2}\ar[r, rightarrowtail]&K_{2}\oplus S_2\ar[r, twoheadrightarrow]& B_{2}
	\end{tikzcd}\]
	and bottom differential
		\[\begin{tikzcd}[row sep= tiny]
		\ldots\ar[r]&P_4\ar[r, "d'_4"]&P_{3}\ar[r, twoheadrightarrow, "d'_{3}"]\ar[d, phantom, "\oplus"]&K_2\ar[d, phantom, "\oplus"]&\\
		&&A_{2}\ar[r, rightarrowtail]&J_{2}\oplus S_2\ar[r, twoheadrightarrow]& B_{2}
		\end{tikzcd}\]
	We will show that $[ \bP]=[\bS_2]-[\bP']$. The lemma then follows by iterating this procedure.
	Consider the following binary acyclic double complex. All differentials written as a single arrow are the identity on the summand appearing in domain and codomain and zero on all other summands. In particular, both differentials agree in this case. The remaining four non-trivial binary acyclic complexes are $\bP, \bP', \bS_2$ and a fourth one explained in the diagram.
	\[\begin{tikzcd}
	 &&A_2\ar[d]\ar[r]&A_2\ar[d, shift left]\ar[d, shift right]&&[+5pt]\\
	 \ldots\ar[r, shift left]\ar[r, shift right]&P_4\ar[r, shift left]\ar[r, shift right]\ar[d]&P_3\oplus A_2\ar[r, shift left]\ar[r, shift right]\ar[d]&J_2\oplus K_2\oplus S_2\ar[r, shift left]\ar[r, shift right]\ar[d, shift left]\ar[d, shift right]&B_2\ar[d]&\\
	 \ldots\ar[r, shift left]\ar[r, shift right]&P_4\ar[r, shift left]\ar[r, shift right]&P_3\ar[r, shift left]\ar[r, shift right]&P_2\oplus B_2\ar[r, shift left]\ar[r, shift right]\ar[d, shift left]\ar[d, shift right]&P_1\oplus B_2\ar[r, shift left]\ar[r, shift right]\ar[d]&P_0\ar[d]\\
	 &&&P_1\ar[r, shift left, "{(\id, d_1)}"]\ar[r, shift right, "{(\id, d_1')}"'] \ar[d, shift left]\ar[d, shift right]&P_1\oplus P_0\ar[r, shift left, "d_1-\id"]\ar[r, shift right, "d_1'-\id"']\ar[d]&P_0\\
	 &&&P_0 \ar[r]&P_0&
	\end{tikzcd}\]
	Applying Nenashev's relation (\cref{lem:nena}) and omitting all summands which are obviously zero, we obtain
	\[ -[\begin{tikzcd} P_1\ar[r, shift left, "{(\id, d_1)}"]\ar[r, shift right, "{(\id, d_1')}"'] &P_1 \oplus P_0\ar[r, shift left, "d_1-\id"]\ar[r, shift right, "d_1'-\id"'] & P_0 \end{tikzcd}] + [\bP] - [\bP'[1]] = [\bS_2]. \]
	We will show that the first summand is trivial. Assuming this, it follows from \cref{lem:shifting} that
	\[ [\bP] + [\bP'] = [\bS_2] \]
	as claimed.
	In fact, triviality of the binary acyclic complex
	\[\begin{tikzcd}
	 P_1\ar[r, shift left, "{(\id, d_1)}"]\ar[r, shift right, "{(\id, d_1')}"'] &[+5pt] P_1\oplus P_0\ar[r, shift left, "d_1-\id"]\ar[r, shift right, "d_1'-\id"'] &[+5pt] P_0
	\end{tikzcd} \]
	in $K_0(\Omega\cN)$ follows directly from the existence of the following short exact sequence of binary acyclic complexes:
	\[\begin{tikzcd}[baseline=(current bounding box.south)]
	 &[+5pt] P_0\ar[d, rightarrowtail]\ar[r, shift left, "-\id"]\ar[r, shift right, "-\id"'] &[+5pt] P_0\ar[d, rightarrowtail]\\
	 P_1\ar[r, shift left, "{(\id, d_1)}"]\ar[r, shift right, "{(\id, d_1')}"'] \ar[d, twoheadrightarrow]&P_1\oplus P_0\ar[r, shift left, "d_1-\id"]\ar[r, shift right, "d_1'-\id"']\ar[d, twoheadrightarrow]&P_0\\
	 P_1\ar[r]&P_1&
	\end{tikzcd}\qedhere\]
\end{proof}

\cref{lem:dim4} immediately implies \cref{thm:dim4} since the complexes $\bS_n$ are supported on $[0,4]$ for all $n \geq 2$.

Our next goal is to prove \cref{thm:dim7}.
\begin{prop}
 \label{prop:split}
 The map $K_0\Omega \cN\to K_0\Omega_{[0,7]}\cN$ given by
 \[ [\bP]\mapsto \sum_{n=2}^{\infty}(-1)^n[\bS_n] \]
 is a well-defined homomorphism.
\end{prop}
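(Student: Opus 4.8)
The plan is to promote the assignment $[\bP] \mapsto \sum_{n \geq 2}(-1)^n[\bS_n]$ to a homomorphism out of $K_0(\Omega\cN)$ by checking that it respects the defining relations of that group. As recalled in \cref{sec:complexes}, $K_0(\Omega\cN)$ is the Grothendieck group of the exact category $B^q\cN$ with the extra relation that a binary acyclic complex whose top and bottom differentials coincide represents $0$, and this quotient description is legitimate because $\Delta$ splits. Thus I would reduce the proposition to three statements: \textbf{(I)} the element $\sum_{n \geq 2}(-1)^n[\bS_n] \in K_0(\Omega_{[0,7]}\cN)$ is independent of the choices of the factorizations of $d_n, d'_n$ and of the Heller data $(A_n, B_n, S_n, \ldots)$; \textbf{(II)} for every short exact sequence $\bP' \rightarrowtail \bP \twoheadrightarrow \bP''$ of binary acyclic complexes the sum is additive; and \textbf{(III)} the sum vanishes whenever the top and bottom differential of $\bP$ agree. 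Given these, (I) and (II) show that the assignment factors through $K_0(B^q\cN)$, then (III) pushes it through to $K_0(\Omega\cN)$, and (II) makes it a homomorphism.

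I would dispose of the routine parts first. The choice of factorization is immaterial: in an exact category the factorization of an admissible morphism as an admissible epimorphism followed by an admissible monomorphism is unique up to unique isomorphism, so $J_n$ and $K_n$ — hence the $\bS_n$ — are determined up to canonical isomorphism. For (III), when $d = d'$ one takes the Heller data $S_n = 0$, $A_n = J_n = K_n$, $B_n = 0$ with the identity exact sequences; then each $\bS_n$ (and $\bS_2$) has equal top and bottom differential, so $[\bS_n] = 0$. For (II), I would first treat the split case $\bP = \bP' \oplus \bP''$: here the associated objects satisfy $J_n(\bP' \oplus \bP'') = J_n(\bP') \oplus J_n(\bP'')$, so choosing the Heller data for $\bP' \oplus \bP''$ to be the direct sum of those for $\bP'$ and $\bP''$ yields $\bS_n(\bP' \oplus \bP'') \cong \bS_n(\bP') \oplus \bS_n(\bP'')$ and additivity on the nose. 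It then remains, for a general extension, to compare $\sum(-1)^n[\bS_n(\bP)]$ with $\sum(-1)^n[\bS_n(\bP' \oplus \bP'')]$, which forces one to confront the short exact sequence $J'_n \rightarrowtail J_n \twoheadrightarrow J''_n$, which need not split.

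The substance of the argument is (I), together with this last step of (II). In both cases the method is the one already used in the proofs of \cref{lem:dim4} and \cref{lem:ordertwo}: I would assemble an explicit binary acyclic double complex out of the two competing collections of data and apply Nenashev's relation (\cref{lem:nena}); the two sides of the resulting equation are then the two alternating sums to be compared, modulo summands which are either diagonal binary acyclic complexes — trivial in $K_0(\Omega\cN)$ — or shifts of such, whose contribution is pinned down by \cref{lem:shifting}. For (I) I would further reduce to altering the Heller data one level at a time; a change at level $m$ affects only $\bS_m$ and $\bS_{m+1}$ (and for $m = 2$ only $\bS_2$ and $\bS_3$), so that the comparison of the two resulting values becomes a single instance of Nenashev's relation for a double complex built from the two choices at level $m$.

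The main obstacle, and the reason the target is $K_0(\Omega_{[0,7]}\cN)$ rather than $K_0(\Omega_{[0,4]}\cN)$, is controlling the supports of all of these auxiliary complexes: each comparison double complex has one direction of length at most $4$ (coming from the $\bS_n$, which are supported on $[0,4]$) and a second direction that contributes the same three extra degrees one already sees in \cref{lem:dim4}, so the total complexes are supported on $[0,7]$ but not, in general, on $[0,4]$. Beyond this bound the work is bookkeeping: writing down the double complexes explicitly, identifying which summands are diagonal, and tracking the signs produced by the shifts. No conceptual input is needed beyond the constructions of \cref{sec:complexes}.
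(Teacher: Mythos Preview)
Your three-step reduction is correct, and (III) and the split case of (II) go through as you say. For (I) the paper follows your outline of changing the Heller data one level at a time, but the execution contains a point you have not anticipated: after one forms the two-row double complex with rows $\bS'_k \oplus \bS_{k+1}[2]$ and $\bS_k \oplus \bS'_{k+1}[2]$, the vertical maps are swap automorphisms $\tau_{J_k}$ and $\tau_{K_k}$, so Nenashev's relation identifies the discrepancy with $[\tau_{J_k}] + [\tau_{K_k}]$, and one then needs $[J_k] = [K_k]$ in $K_0(\cN)$ together with \cref{lem:ordertwo} to kill this. Incidentally, this is also where the bound $[0,7]$ really comes from: the rows have support $[0,6]$ and there are two of them, not ``length $4$ plus three extra degrees'' as you describe.

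The genuine gap is in your non-split step of (II). The comparison you propose, between $\bS_n(\bP)$ and $\bS_n(\bP') \oplus \bS_n(\bP'')$, is \emph{not} an instance of (I): in (I) the objects $J_n, K_n$ are fixed and only the Heller data varies, whereas here $J_n(\bP)$ and $J_n(\bP') \oplus J_n(\bP'')$ are different objects, related only by a non-split extension. There is no evident binary acyclic double complex whose rows are the two competing $\bS_n$'s and whose columns are visibly trivial. The paper avoids this problem entirely by invoking Quillen's Additivity theorem: since $K_0(\cE\cN) \cong K_0(\cN) \times K_0(\cN)$, one has $[J'_n \rightarrowtail J_n \twoheadrightarrow J''_n] = [K'_n \rightarrowtail K_n \twoheadrightarrow K''_n]$ in $K_0(\cE\cN)$, so Heller's criterion \emph{applied in the exact category $\cE\cN$} yields Heller data that are themselves short exact sequences $A'_n \rightarrowtail A_n \twoheadrightarrow A''_n$, etc. With these compatible choices one obtains honest short exact sequences $\bS'_n \rightarrowtail \bS_n \twoheadrightarrow \bS''_n$ on the nose, and no further double-complex comparison is needed. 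You should either incorporate this appeal to Additivity or make explicit how your proposed double complex accommodates the change in the $J_n$ themselves.
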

\begin{proof}
	 Note that all $J_n$ and $K_n$ are unique up to isomorphism.
	
	We first show that $\sum_{n=2}^{\infty}(-1)^n[\bS_n]$ is independent of the choices of $A_n, B_n, S_n$ and the extensions
	\[\begin{tikzcd}
	A_n\ar[r, rightarrowtail]&J_n\oplus S_n\ar[r, twoheadrightarrow]& B_n
	\end{tikzcd} \quad\text{and}\quad\begin{tikzcd}
	A_n\ar[r, rightarrowtail]&K_n\oplus S_n\ar[r, twoheadrightarrow]& B_n.
	\end{tikzcd}\]
	Fix $k > 2$, and let $A_k'$, $B_k'$ and $S_k'$ be different choices fitting into extensions as $A_k$, $B_k$ and $S_k$.
	Denote by $\bS_k'$ and $\bS_{k+1}'$ the same binary acyclic complexes as $\bS_k$ and $\bS_{k+1}$, except that the extensions involving $A_k$, $B_k$ and $S_k$ are replaced by those involving $A_k'$, $B_k'$ and $S_k'$. Note that $\bS_l$ is independent of the choice of $A_k$, $B_k$ and $S_k$ for $l \neq k, k+1$.
	
	The binary acyclic complexes $\bS_k' \oplus (\bS_{k+1}[2])$ and $\bS_k \oplus (\bS_{k+1}'[2])$ have isomorphic underlying graded objects.
	We regard both as binary acyclic complexes
	 {\footnotesize\[\begin{tikzcd}
	  A_{k+1}\ar[r, shift left]\ar[r, shift right] &[-10pt] K_{k+1} \oplus S_{k+1} \oplus J_{k+1}\ar[r, shift left]\ar[r, shift right] &[-10pt] B_{k+1} \oplus P_{k+1} \oplus A_k \oplus A_k'\ar[r, shift left]\ar[r, shift right]&[-10pt]~\end{tikzcd}\]\[\begin{tikzcd}
	J_k \oplus K_k \oplus S_k \oplus K_k \oplus S_k' \oplus J_k\ar[r, shift left]\ar[r, shift right] &[-10pt] B_k \oplus B_k' \oplus P_k \oplus A_{k-1} \ar[r, shift left]\ar[r, shift right] &[-10pt] J_{k-1} \oplus K_{k-1} \oplus S_{k-1}\ar[r, shift left]\ar[r, shift right] &[-10pt] B_{k-1},
	 \end{tikzcd}\]}
	Both the pair of chain complexes given by the top differentials and the pair of chain complexes given by the bottom differentials of $\bS_k' \oplus (\bS_{k+1}[2])$ and $\bS_k \oplus (\bS_{k+1}'[2])$ are isomorphic:
	The isomorphism for the top differentials has to flip the two copies of $K_k$, while the one for the bottom differentials has to flip the two copies of $J_k$.
	
	That is, there is the following binary acyclic double complex whose upper row is $\bS'_k \oplus (\bS_{k+1}[2])$, whose lower row is $\bS_k \oplus (\bS_{k+1}'[2])$. Here all unmarked downward arrows are the identity, and $\tau_K$ and $\tau_J$ denote the automorphisms switching the two copies of $K_k$ and $J_k$, respectively.
	 {\footnotesize\[\begin{tikzcd}
	  \ldots\ar[r, shift left]\ar[r, shift right] &[-15pt] B_{k+1} \oplus P_{k+1} \oplus A_k \oplus A_k'\ar[r, shift left]\ar[r, shift right]\ar[d] &[-15pt] J_k \oplus K_k \oplus S_k \oplus K_k \oplus S_k' \oplus J_k\ar[r, shift left]\ar[r, shift right]\ar[d, shift left, "\tau_J"]\ar[d, shift right, "\tau_K"'] &[-15pt] B_k \oplus B_k' \oplus P_k \oplus A_{k-1}\ar[d]\ar[r, shift left]\ar[r, shift right] &[-15pt] \ldots\\
	  \ldots\ar[r, shift left]\ar[r, shift right] & B_{k+1} \oplus P_{k+1} \oplus A_k \oplus A_k'\ar[r, shift left]\ar[r, shift right] & J_k \oplus K_k \oplus S_k \oplus K_k \oplus S_k' \oplus J_k \ar[r, shift left]\ar[r, shift right] & B_k \oplus B_k' \oplus P_k \oplus A_{k-1}\ar[r, shift left]\ar[r, shift right] & \ldots
	 \end{tikzcd}\]	}
	 Applying \cref{lem:nena}, the difference between the classes of $\bS_k\oplus (\bS'_{k+1}[2])$ and $\bS'_k\oplus (\bS_{k+1}[2])$ is therefore the same as
	 \[
	 [\begin{tikzcd}
	 K_k\oplus K_k\ar[r, shift left, "\tau_K"]\ar[r, shift right, "\id"'] &[-5pt] K_k\oplus K_k\end{tikzcd}]+
	 [\begin{tikzcd}
		J_k\oplus J_k\ar[r, shift left, "\id"]\ar[r, shift right, "\tau_J"']&[-5pt] J_k\oplus J_k
	 \end{tikzcd}]\]
	 in $K_0(\Omega_{[0,7]}\cN)$.
	 Since $J_k$ and $K_k$ represent the same class in $K_0$, we have
	 \[ [\begin{tikzcd} K_k\oplus K_k\ar[r, shift left, "\tau_K"]\ar[r, shift right, "\id"']&[-5pt] K_k\oplus K_k\end{tikzcd}]=[\begin{tikzcd}J_k\oplus J_k\ar[r, shift left, "\tau_J"]\ar[r, shift right, "\id"']&[-5pt]J_k\oplus J_k \end{tikzcd}]. \]
	 Therefore,  $\bS'_k\oplus (\bS_{k+1}[2])$ and $\bS_k\oplus (\bS'_{k+1}[2])$ represent the same class in $K_0(\Omega_{[0,7]}\cN)$ by \cref{lem:ordertwo}.
	 In combination with \cref{lem:shifting}, this shows
	 \[ [\bS_k]-[\bS_{k+1}]=[\bS'_k]-[\bS'_{k+1}]. \]
	 An analogous argument works for $k=2$, so the class $\sum_{n=2}^{\infty}(-1)^n[\bS_n]$ is independent of the choices we make.
	 
	 Next, we show that the map is independent of the choice of the representative $\bP$ of $[\bP]$. First note that if both differentials of the double complex $\bP$ agree, then $K_n$ and $J_n$ agree and we can choose the same extension for both. In this case, both differentials for all $\bS_n$ agree, so $[\bS_n] = 0$ for all $n$.
	 
	 It remains to see that for a short exact sequence $\bP'\rightarrowtail \bP\twoheadrightarrow\bP''$ we also get short exact sequences $\bS'_n\rightarrowtail \bS_n\twoheadrightarrow \bS''_n$ for all $n \geq 2$.
	 For every $n$, we have short exact sequences $J'_n\rightarrowtail J_n\twoheadrightarrow J_n''$ and $K'_n\rightarrowtail K_n\twoheadrightarrow K''_n$.
	 As above, the $K_0$-classes of $K'_n$ and $J'_n$ as well as those of $K_n''$ and $J_n''$ agree.
	 By the Additivity theorem \cite[Theorem~2]{Quillen1973}, we have
	 \[ [ K'_n\rightarrowtail K_n\twoheadrightarrow K''_n ] = [ J'_n\rightarrowtail J_n\twoheadrightarrow J_n'' ] \in K_0(\cE\cN), \]
	 where $\cE\cN$ is the exact category of exact sequences in $\cN$.
	 Therefore, we find short exact sequences $A'_n \rightarrowtail A_n\twoheadrightarrow A_n''$, $B'_n \rightarrowtail B_n \twoheadrightarrow B_n''$ and $S'_n \rightarrowtail S_n \twoheadrightarrow S''_n$ fitting into short exact sequences of short exact sequences:
	 \[\begin{tikzcd}
	 A_n'\ar[r, rightarrowtail]\ar[d, rightarrowtail]&J'_n\oplus S'_n\ar[r, twoheadrightarrow]\ar[d, rightarrowtail]&B_n'\ar[d, rightarrowtail]\\
	 A_n\ar[r, rightarrowtail]\ar[d, twoheadrightarrow]&J_n \oplus S_n\ar[r, twoheadrightarrow]\ar[d, twoheadrightarrow]&B_n\ar[d, twoheadrightarrow]\\
	 A_n''\ar[r, rightarrowtail]&J_n''\oplus S''_n\ar[r, twoheadrightarrow]&B_n''
	 \end{tikzcd}
	 \quad\text{and}\quad
	 \begin{tikzcd}
	 	A_n'\ar[r, rightarrowtail]\ar[d, rightarrowtail]&K'_n\oplus S'_n\ar[r, twoheadrightarrow]\ar[d, rightarrowtail]&B_n'\ar[d, rightarrowtail]\\
	 	A_n\ar[r, rightarrowtail]\ar[d, twoheadrightarrow]&K_n\oplus S_n \ar[r, twoheadrightarrow]\ar[d, twoheadrightarrow]&B_n\ar[d, twoheadrightarrow]\\
	 	A_n''\ar[r, rightarrowtail]&K_n''\oplus S''_n\ar[r, twoheadrightarrow]&B_n''
	 \end{tikzcd}
	 \]
	 Note that the middle vertical exact sequences are direct sums of the given sequences.
	 Using these extensions for the definition of $\bS'_n,\bS_n$ and $\bS''_n$, we get the desired short exact sequence $\bS'_n\rightarrowtail \bS_n\twoheadrightarrow \bS''_n$.
\end{proof}

\begin{proof}[Proof of \cref{thm:dim7}]
\cref{lem:dim4} and \cref{prop:split} prove the case $n=1$. The case $n > 1$ follows by induction, compare \cite[Remark~8.1]{Grayson2012}.

The map $K_0(\Omega^n_{[0,7]}\cN)\to K_0(\Omega\Omega_{[0,7]}^{n-1}\cN)$ is a retract of $K_0(\Omega_{[0,7]}(B^q_{[0,7]})^{n-1}\cN)\to K_0(\Omega(B^q_{[0,7]})^{n-1}\cN)$ which admits a natural section by the case $n=1$. Hence $K_0(\Omega^n_{[0,7]}\cN)\to K_0(\Omega\Omega_{[0,7]}^{n-1}\cN)$ admits a natural section as well.

Since $\Omega_{[0,7]}$ and $\Omega$ commute, it suffices to show that $K_0(\Omega_{[0,7]}^{n-1}\Omega\cN)\to K_0(\Omega^n\cN)$ admits a natural section. But this map is a retract of the map $K_0(\Omega_{[0,7]}^{n-1}B^q\cN)\to K_0(\Omega^{n-1}B^q\cN)$, which admits a natural section by the induction assumption.
\end{proof}

\section{Algebraic $K$-theory of infinite product categories}\label{sec:products}

The results of \cref{sec:shortening} allow us to show that the comparison map $\bbK(\prod_{i\in I}\cN_i) \to \prod_{i\in I} \bbK(\cN_i)$ of connective $K$-theory spectra is a $\pi_*$-isomorphism:

\begin{thm}
\label{thm:commute1}
For every family $\{\cN_i\}_{i\in I}$ of exact categories and every $n\in\bbN$ the natural map
\[ K_n(\prod_{i\in I}\cN_i) \to \prod_{i\in I} K_n(\cN_i)\]
is an isomorphism.
\end{thm}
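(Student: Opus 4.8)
The plan is to avoid the fact that Grayson's full model $\Omega^n\cN$ interacts badly with infinite products by passing to the bounded model $\Omega^n_{[0,7]}\cN$ furnished by \cref{thm:dim7}, which is finitary and therefore does. Concretely, the first observation I would record is that for any bounded interval $J$ and any finite word $\bfW$ in $\{B,C\}$, the exact category $\bfW^q_J\bigl(\prod_{i\in I}\cN_i\bigr)$ is canonically isomorphic to $\prod_{i\in I}\bfW^q_J\cN_i$: a (binary, possibly iterated) complex supported on a bounded interval is given by only finitely much data in the underlying category, and admissible monomorphisms, admissible epimorphisms, short exact sequences, and hence acyclicity, are all detected factorwise in a product of exact categories; iterating the construction preserves this identification. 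It is worth stressing that this breaks down for $J=[0,\infty)$, where a complex has finite but not uniformly bounded length over $i\in I$ --- this is precisely where \cref{thm:dim7} is needed.

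Next I would use that $K_0$ commutes with arbitrary products of exact categories: the comparison map $K_0\bigl(\prod_i\mathcal{M}_i\bigr)\to\prod_i K_0(\mathcal{M}_i)$ is manifestly surjective, and it is injective by Heller's criterion (\cref{lem:hellerscriterion}), since a triple of objects witnessing stable extension-equivalence in each factor $\mathcal{M}_i$ assembles, by taking products, into such a triple in $\prod_i\mathcal{M}_i$. Since $\bbK$ takes values in connective spectra, the cofiber of a map of connective spectra is again connective and its $\pi_0$ is the cokernel of the induced map on $\pi_0$; hence $\pi_0$ of the total homotopy cofiber of a cube of connective spectra is computed by iterating cokernels across the cube. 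Combining this with the previous paragraph and the fact that the formation of cokernels of abelian groups commutes with arbitrary products (products being exact), an induction over the dimension of the defining cube and over $n$ yields that the canonical map
\[ K_0\Bigl(\Omega^n_{[0,7]}\bigl(\textstyle\prod_{i\in I}\cN_i\bigr)\Bigr)\longrightarrow\prod_{i\in I}K_0\bigl(\Omega^n_{[0,7]}\cN_i\bigr) \]
is an isomorphism.

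Finally, I would invoke \cref{thm:dim7}, which provides a natural section of the canonical map $K_0(\Omega^n_{[0,7]}\cN)\to K_0(\Omega^n\cN)\cong K_n(\cN)$. Thus $\cN\mapsto K_n(\cN)$ is a natural retract of $\cN\mapsto K_0(\Omega^n_{[0,7]}\cN)$, so the comparison map $K_n\bigl(\prod_i\cN_i\bigr)\to\prod_i K_n(\cN_i)$ is, in the arrow category of abelian groups, a retract of the isomorphism of the previous paragraph, the retraction data being given by the section and the canonical projection and the relevant squares commuting by naturality. A retract of an isomorphism is an isomorphism, which proves the theorem. I expect essentially all the difficulty to be concentrated in the reduction to a finitary model, i.e.\ in \cref{thm:dim7} and the material of \cref{sec:shortening}; granting that, the argument above is formal, the only mild bookkeeping being the passage through total cofibers and connectivity in the second step.
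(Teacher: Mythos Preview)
Your proposal is correct and follows essentially the same approach as the paper: use Heller's criterion to show $K_0$ commutes with products, observe that the bounded categories $\bfW^q_{[0,7]}$ commute with products so that $K_0(\Omega^n_{[0,7]}-)$ does too, and then invoke the natural section of \cref{thm:dim7} to exhibit $K_n(-)\cong K_0(\Omega^n-)$ as a natural retract of $K_0(\Omega^n_{[0,7]}-)$. The paper compresses your middle step into a single sentence (just citing the isomorphism of categories $B^q_{[0,7]}(\prod_i\cN_i)\cong\prod_i B^q_{[0,7]}\cN_i$ and $C^q_{[0,7]}(\prod_i\cN_i)\cong\prod_i C^q_{[0,7]}\cN_i$), whereas you spell out the iterated-cokernel description of $\pi_0$ of the total cofiber; but this is a difference in exposition, not in strategy.
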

\begin{proof}
 Note that the natural map $K_0(\prod_{i\in I}\cN_i) \to \prod_{i\in I} K_0(\cN_i)$ is clearly surjective, and that injectivity is a consequence of \cref{lem:hellerscriterion}.	
	
 Recall that $K_n(\cN)$ is naturally isomorphic to $K_0(\Omega^n\cN)$.
 Consider the following diagram, where the vertical maps are the sections from \cref{thm:dim7} followed by the canonical homomorphisms. 
 \[\begin{tikzcd}
  K_0(\Omega^n\prod_{i\in I}\cN_i)\ar[r]\ar[d] & \prod_{i\in I} K_0(\Omega^n\cN_i)\ar[d]\\
  K_0(\Omega^n_{[0,7]}\prod_{i\in I}\cN_i) \ar[d]\ar[r, "\cong"]&\prod_{i\in I} K_0(\Omega^n_{[0,7]}\cN_i) \ar[d]\\
  K_0(\Omega^n\prod_{i\in I}\cN_i)\ar[r] & \prod_{i\in I} K_0(\Omega^n\cN_i)
 \end{tikzcd}\]
 Since the natural functors $C^q_{[0,7]}(\prod_{i\in I}\cN_i) \to \prod_{i\in I} C^q_{[0,7]}(\cN_i)$ and $B^q_{[0,7]}(\prod_{i\in I}\cN_i) \to \prod_{i\in I} B^q_{[0,7]}(\cN_i)$ are isomorphisms,
 the middle horizontal map is an isomorphism.
 A diagram chase implies that the natural map $K_n(\prod_{i\in I}\cN_i) \to \prod_{i\in I} K_n(\cN_i)$ is an isomorphism.
\end{proof}

In the remainder of this section, we extend this statement to non-connective $K$-theory.
Our model for the non-connective algebraic $K$-theory $\bbK^{-\infty}$ of an exact category is Schlichting's delooping \cite[Section~12]{Schlichting2006}.

The argument to extend \cref{thm:commute1} to non-connective algebraic $K$-theory is based on a localization sequence of Schlichting \cite{Schlichting2004}.
To state it, we need to recall the following definition.

\begin{definition}[{\cite[Definition~1.3 and 1.5]{Schlichting2004}}]
 Let $\cN$ be an exact category, and let $\cA \subseteq \cN$ be an extension closed full subcategory.
 \begin{enumerate}
  \item An admissible epimorphism $N \twoheadrightarrow A$ with $N \in \cN$ and $A \in \cA$ is \emph{special} if there exists an admissible monomorphism $B \rightarrowtail N$ with $B \in \cA$ such that the composition $B \to A$ is an admissible epimorphism.
  \item The inclusion $\cA \subseteq \cN$ is called \emph{left $s$-filtering} if the following holds:
   \begin{enumerate}
    \item The subcategory $\cA$ is closed under admissible subobjects and admissible quotients in $\cN$.
    \item Every admissible epimorphism $N \twoheadrightarrow A$ from an object $N \in \cN$ to an object $A \in \cA$ is special.
    \item For every morphism $f \colon A \to N$ with $A \in \cA$ and $N \in \cN$ there exists an object $B \in \cA$, a morphism $f' \colon A \to B$ and an admissible monomorphism $i \colon B \rightarrowtail N$ such that $if' = f$.
   \end{enumerate}
 \end{enumerate}
\end{definition}

Let $\cA \subseteq \cN$ be a left $s$-filtering subcategory.
A \emph{weak isomorphism} in $\cN$ is a morphism which can be written as the composition of admissible monomorphisms with cokernel in $\cA$ and admissible epimorphisms with kernel in $\cA$.
Let $\Sigma$ denote the collection of weak isomorphisms in $\cN$.
The set $\Sigma$ satisfies a calculus of left fractions \cite[Lemma~1.13]{Schlichting2004}, so one can form the localization $\cN[\Sigma^{-1}]$.
The localization inherits an exact structure from $\cN$ by declaring a sequence to be exact if it is isomorphic to the image of an exact sequence under the localization functor $\cN \to \cN[\Sigma^{-1}]$ \cite[Proposition~1.16]{Schlichting2004}.
The resulting exact category is denoted $\cN/\cA$.
This quotient category has the universal property
that functors on $\cN/\cA$ correspond bijectively to functors on $\cN$ which vanish on $\cA$ \cite[Proposition~1.16]{Schlichting2004}.

\begin{thm}[{\cite[Theorem~2.10]{Schlichting2004}}]
 \label{thm:schlichtinglocalization}
 Let $\cA$ be an idempotent complete, left $s$-filtering subcategory of the exact category $\cN$.
 Then the sequence $\cA \to \cN \to \cN/\cA$ of exact functors induces a homotopy fiber sequence of spectra
 \[ \bbK^{-\infty}(\cA) \to \bbK^{-\infty}(\cN) \to \bbK^{-\infty}(\cN/\cA). \]
\end{thm}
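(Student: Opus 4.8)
The statement is Schlichting's localization theorem, so the plan is to reproduce the structure of his argument: transport the problem to the derived categories of bounded complexes, produce a localization sequence of connective $K$-theory spectra there by Waldhausen's machinery, and then repair the resulting failure of exactness on $\pi_0$ by iterating the delooping construction. Concretely, I would first replace the exact categories $\cA$, $\cN$ and $\cN/\cA$ by the complicial biWaldhausen categories $C^b(\cA)$, $C^b(\cN)$ and $C^b(\cN/\cA)$ of bounded chain complexes, with degreewise admissible monomorphisms as cofibrations and quasi-isomorphisms as weak equivalences; by the Gillet--Waldhausen theorem this change does not affect $\bbK$. The crucial input is that the left $s$-filtering hypothesis makes the calculus-of-fractions quotient $\cN \to \cN/\cA$ compatible with passing to complexes: using the axioms (a)--(c) one shows that a morphism in $C^b(\cN)$ becomes invertible in the quotient precisely when its mapping cone lies in the thick subcategory of $\mcD^b(\cN)$ generated by $\mcD^b(\cA)$, and that the induced exact functor $\mcD^b(\cN)/\mcD^b(\cA) \to \mcD^b(\cN/\cA)$ is fully faithful and cofinal. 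Here (b) and (c) are exactly what is needed to lift objects and morphisms from the quotient, and idempotent completeness of $\cA$ is needed so that the subcategory of $\cA$-acyclic complexes carries the $K$-theory of $\cA$ by the resolution theorem.

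Granting this, I would apply Waldhausen's fibration theorem to the two weak equivalence structures on $C^b(\cN)$ given by isomorphisms and by $\cA$-quasi-isomorphisms. This yields a homotopy fiber sequence of connective spectra $\bbK(\cA) \to \bbK(\cN) \to \bbK(\mcD^b(\cN)/\mcD^b(\cA))$. The Thomason--Trobaugh cofinality theorem then identifies the last term with $\bbK(\cN/\cA)$ on all homotopy groups in positive degrees, and with a subgroup of $K_0(\cN/\cA)$ on $\pi_0$. Hence $\bbK(\cA) \to \bbK(\cN) \to \bbK(\cN/\cA)$ is a homotopy fiber sequence after passing to connected covers: it is exact on $\pi_i$ for $i \geq 1$ and only left exact on $\pi_0$, the defect being the possible failure of $K_0(\cN) \to K_0(\cN/\cA)$ to be surjective.

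To upgrade this to a homotopy fiber sequence of non-connective spectra I would feed it through Schlichting's delooping $\bbK^{-\infty}(-) \simeq \operatorname{hocolim}_n \Omega^n\bbK(S^n(-))$, where $S$ is the suspension built from a flasque replacement. The point is that $S$ sends a left $s$-filtering inclusion to a left $s$-filtering inclusion and is compatible with the quotient construction, so the previous paragraph applies verbatim to $S^n\cA \subseteq S^n\cN$ for every $n$; the $\pi_0$-defect for $S^n$ reappears as the $\pi_{-n}$-defect of the non-connective sequence, but since the relevant $K_0$ becomes surjective after one further suspension, passing to the homotopy colimit annihilates every such defect and leaves the desired homotopy fiber sequence $\bbK^{-\infty}(\cA) \to \bbK^{-\infty}(\cN) \to \bbK^{-\infty}(\cN/\cA)$.

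I expect the main obstacle to be the comparison between the formal Verdier quotient $\mcD^b(\cN)/\mcD^b(\cA)$ and the bounded derived category of the honestly constructed exact quotient $\cN/\cA$, together with the bookkeeping showing that the idempotent-completion discrepancy between them is exactly the $\pi_0$-obstruction that the delooping repairs. This is the technical heart of the argument, and it is precisely here that the left $s$-filtering hypothesis — rather than mere extension-closedness — is genuinely needed.
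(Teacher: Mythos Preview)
The paper does not prove this theorem at all: it is simply quoted from \cite[Theorem~2.10]{Schlichting2004} and used as a black box in the proof of \cref{thm:commute}. So there is no ``paper's own proof'' to compare your proposal against.

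That said, your outline is a faithful high-level summary of how Schlichting actually establishes the result in \cite{Schlichting2004}: pass to complicial biWaldhausen categories via Gillet--Waldhausen, use the left $s$-filtering hypothesis to identify the Verdier quotient $\mcD^b(\cN)/\mcD^b(\cA)$ with $\mcD^b(\cN/\cA)$ up to cofinality, apply Waldhausen's fibration theorem plus Thomason--Trobaugh cofinality to get the connective fiber sequence with a $\pi_0$-defect, and then deloop. Your identification of the technical heart --- the comparison of the formal Verdier quotient with the derived category of the exact quotient, and the role of idempotent completeness --- is accurate. If you were asked to reproduce Schlichting's proof, this would be the right plan; for the purposes of the present paper, however, no argument is needed beyond the citation.
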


Finally, recall the \emph{countable envelope} $\cF\cN$ of an idempotent complete exact category $\cN$ \cite[Section~3]{Schlichting2004} (and references therein).
The concrete definition need not concern us here.
It suffices to know that $\cF\cN$ is an exact category which contains $\cN$ as a left $s$-filtering subcategory, and that $\bbK^{-\infty}(\cF\cN)$ is contractible \cite[Lemma~3.2]{Schlichting2004};
the latter claim holds because $\cF\cN$ admits countable coproducts.
Moreover, $\cF\cN$ depends functorially on $\cN$.
Denote by $\cS\cN$ the quotient category $\cF\cN/\cN$.
The category $\cS\cN$ is called the \emph{suspension} of $\cN$.
Write $\cS^n\cN$ for the $n$-fold suspension of $\cN$.
From \cref{thm:schlichtinglocalization}, it follows directly that $\Omega^n\bbK^{-\infty}(\cS^n\cN)$ is naturally equivalent to $\bbK^{-\infty}(\cN)$.
In particular, we have $K_{-n}(\cN) \cong K_0(\Idem(\cS^n\cN))$ for all $n > 0$, where $\Idem(-)$ denotes the idempotent completion functor.

\begin{proof}[Proof of \cref{thm:commute}]
 Let $\{ \cN_i \}_{i \in I}$ be a family of exact categories.
 
 Since the natural map $\bbK^{-\infty}(\cN) \xrightarrow{\sim} \bbK^{-\infty}(\Idem(\cN))$ is an equivalence for every exact category and $\Idem(\prod_{i \in I} \cN_i) \cong \prod_{i \in I} \Idem(\cN_i)$,
 we may assume that $\cN_i$ is idempotent complete for all $i \in I$.
 
 Consider the left $s$-filtering inclusion $\prod_{i \in I} \cN_i \subseteq \cF(\prod_{i \in I} \cN_i)$.
 The various projection functors $\prod_{i \in I} \cN_i \to \cN_j$ induce an exact functor $\cF(\prod_{i \in I} \cN_i) \to \prod_{i \in I} \cF\cN_i$.
 Moreover, the inclusion $\prod_{i \in I} \cN_i \subset \prod_{i \in I} \cF\cN_i$ is left $s$-filtering since it is left $s$-filtering on each factor.
 Since $\cS\cN_i$ is obtained from $\cF\cN_i$ by a calculus of left fractions,
 we can identify $\prod_{i \in I} \cF\cN_i / \prod_{i \in I} \cN_i \cong \prod_{i \in I} \cS\cN_i$.
 Therefore, we have by \cref{thm:schlichtinglocalization} a map of homotopy fiber sequences of spectra:
 \[\begin{tikzcd}
   \bbK^{-\infty}(\prod_{i \in I} \cN_i)\ar[r]\ar[d, "\id"] & \bbK^{-\infty}(\cF(\prod_{i \in I} \cN_i))\ar[r]\ar[d] & \bbK^{-\infty}(\cS(\prod_{i \in I} \cN_i))\ar[d] \\
   \bbK^{-\infty}(\prod_{i \in I} \cN_i)\ar[r] & \bbK^{-\infty}(\prod_{i \in I} \cF\cN_i)\ar[r] & \bbK^{-\infty}(\prod_{i \in I} \cS\cN_i)
  \end{tikzcd} \]
 Since both $\cF(\prod_{i \in I} \cN_i)$ and $\prod_{i \in I} \cF\cN_i$ admit countable coproducts, the $K$-theory of both vanishes and the middle vertical arrow is a $\pi_*$-isomorphism.
 Hence, the right vertical map is a $\pi_*$-isomorphism.
 By induction, it follows that the canonical map
 \[ \bbK^{-\infty}(\cS^n(\prod_{i \in I} \cN_i)) \to \bbK^{-\infty}(\prod_{i \in I} \cS^n\cN_i) \]
 is a $\pi_*$-isomorphism for every family of idempotent complete exact categories.
 
 Let $n > 0$. We have the commutative diagram
 \[\begin{tikzcd}
   K_{-n}(\prod_{i \in I} \cN_i)\ar[r, phantom, "\cong"]\ar[d] &[-20pt] K_0(\Idem(\cS^n(\prod_{i \in I} \cN_i)))\ar[r, "c"]\ar[d] & K_0(\prod_{i \in I} \Idem(\cS^n\cN_i))\ar[dl] \\
   \prod_{i \in I} K_{-n}(\cN_i)\ar[r, phantom, "\cong"] & \prod_{i \in I} K_0(\Idem(\cS^n\cN_i)) &
  \end{tikzcd}\]
 The map $c$ is an isomorphism as we have just discussed.
 Since the diagonal map is an isomorphism by \cref{thm:commute1}, the theorem follows.
\end{proof}

\begin{remark}
 Note that the proof for negative $K$-groups only used that $K_0$ commutes with infinite products,
 which was a direct consequence of \cref{lem:hellerscriterion}.
\end{remark}

\section{The relation to Nenashev's \texorpdfstring{$K_1$}{K1}}\label{sec:nenashev}

The abelian group $K_0(\Omega\cN)$ is not the first algebraic description of $K_1$ of an exact category.
Nenashev gave the following description of $K_1(\cN)$.

\begin{definition}
 Define $K_1^N(\cN)$ as the abelian group generated by binary acyclic complexes of length two
 \[ \bP = \begin{tikzcd} P_2 \ar[r, rightarrowtail, shift right]\ar[r, rightarrowtail, shift left] & P_1\ar[r, twoheadrightarrow, shift right]\ar[r, twoheadrightarrow, shift left] & P_0 \end{tikzcd}\]
 subject to the following relations:
 \begin{enumerate}
  \item If the top and bottom differential of a binary acyclic complex coincide, that complex represents zero.
  \item For any binary acyclic double complex (see \cref{rem:binarydouble_notation})
   \[\begin{tikzcd}
	P_2'\ar[r, rightarrowtail, shift right]\ar[r, rightarrowtail, shift left]\ar[d, rightarrowtail, shift right]\ar[d, rightarrowtail, shift left] & P_1'\ar[r, twoheadrightarrow, shift right]\ar[r, twoheadrightarrow, shift left]\ar[d, rightarrowtail, shift right]\ar[d, rightarrowtail, shift left] & P_0\ar[d, rightarrowtail, shift right]\ar[d, rightarrowtail, shift left] \\
	P_2\ar[r, rightarrowtail, shift right]\ar[r, rightarrowtail, shift left]\ar[d, twoheadrightarrow, shift right]\ar[d, twoheadrightarrow, shift left] & P_1\ar[r, shift right]\ar[r, twoheadrightarrow, shift left]\ar[d, twoheadrightarrow, shift right]\ar[d, twoheadrightarrow, shift left] & P_0\ar[d, twoheadrightarrow, shift right]\ar[d, twoheadrightarrow, shift left] \\
	P_2''\ar[r, rightarrowtail, shift right]\ar[r, rightarrowtail, shift left] & P_1''\ar[r, twoheadrightarrow, shift right]\ar[r, twoheadrightarrow, shift left] & P_0''
	\end{tikzcd} \]
  we have
  \[ [\bP_0] - [\bP_1] + [\bP_2] = [\bP'] - [\bP] + [\bP'']. \]
 \end{enumerate}
\end{definition}

The main result of \cite{Nenashev1998} states that $K_1^N(\cN)$ is isomorphic to $K_1(\cN)$.
By \cref{lem:nena}, regarding a binary acyclic complex of length two as a class in $K_0(\Omega\cN)$ defines a natural homomorphism
\[ \Phi \colon K_1^N(\cN) \to K_0(\Omega\cN), \]
as already remarked in the introduction. In this section, we prove \cref{thm:nenashev_vs_grayson}.  Before doing so, we give the following corollary.

\begin{cor}
	For all $n\geq 1$, the homomorphism $K_0(\Omega_{[0,2]}^n\cN) \to K_0(\Omega^n\cN)$ is a surjection and the homomorphism $K_0(\Omega_{[0,4]}^n\cN) \to K_0(\Omega^n\cN)$ admits a natural section.
\end{cor}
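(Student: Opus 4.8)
The plan is to deduce both assertions from \cref{thm:nenashev_vs_grayson} together with the machinery of \cref{sec:complexes} that handles iterated $\Omega$-constructions. The key observation is that \cref{thm:nenashev_vs_grayson}, combined with Nenashev's theorem identifying $K_1^N(\cN)$ with $K_1(\cN) \cong K_0(\Omega\cN)$, shows that the canonical map $K_0(\Omega_{[0,2]}\cN) \to K_0(\Omega\cN)$ is \emph{surjective}: every generator of $K_1^N(\cN)$ is a binary acyclic complex of length two, hence comes from $K_0(\Omega_{[0,2]}\cN)$, and $\Phi$ being an isomorphism means these generate all of $K_0(\Omega\cN)$. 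This is the case $n=1$ of the first claim. For the second claim with $n=1$, note that \cref{thm:dim7} already gives a natural section of $K_0(\Omega_{[0,7]}\cN) \to K_0(\Omega\cN)$; but now, knowing that length-two complexes suffice, one can revisit the construction of the complexes $\bS_n$ in \cref{lem:dim4} and \cref{prop:split} and observe that the shortening procedure applied to a length-two complex $\bP$ produces complexes supported on $[0,4]$ (the $\bS_n$ are already supported on $[0,4]$). Composing the surjection from the first part with a choice of section into $K_0(\Omega_{[0,2]}\cN)$ and then applying the length-$\le 4$ shortening yields the desired natural section of $K_0(\Omega_{[0,4]}\cN) \to K_0(\Omega\cN)$.

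First I would state the case $n=1$ as just described: surjectivity of $K_0(\Omega_{[0,2]}\cN) \to K_0(\Omega\cN)$ is immediate from \cref{thm:nenashev_vs_grayson} and the fact that $K_1^N(\cN)$ is generated by length-two complexes, and the section of $K_0(\Omega_{[0,4]}\cN) \to K_0(\Omega\cN)$ is obtained by first noting, via \cref{thm:nenashev_vs_grayson}, that the section from \cref{prop:split} restricted to classes represented by length-two complexes only involves the $\bS_n$, which are supported on $[0,4]$. Then I would pass to general $n$ exactly as in the proof of \cref{thm:dim7}: the map $K_0(\Omega_{[0,2]}^n\cN) \to K_0(\Omega\Omega_{[0,2]}^{n-1}\cN)$ is a retract of $K_0(\Omega_{[0,2]}(B^q_{[0,2]})^{n-1}\cN) \to K_0(\Omega(B^q_{[0,2]})^{n-1}\cN)$, which is surjective by the $n=1$ case applied to the exact category $(B^q_{[0,2]})^{n-1}\cN$; and then, using that $\Omega_{[0,2]}$ commutes with $\Omega$, one reduces $K_0(\Omega_{[0,2]}^{n-1}\Omega\cN) \to K_0(\Omega^n\cN)$ to $K_0(\Omega_{[0,2]}^{n-1}B^q\cN) \to K_0(\Omega^{n-1}B^q\cN)$, which is surjective by induction. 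The same retract-and-induct scheme handles the natural section in the $[0,4]$ case, identically to the proof of \cref{thm:dim7}.

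The main obstacle I anticipate is not conceptual but bookkeeping: one must verify that \cref{thm:nenashev_vs_grayson} genuinely upgrades the $[0,7]$-section of \cref{thm:dim7} to a $[0,4]$-section, i.e.\ that when the input complex $\bP$ has length two, the section $[\bP] \mapsto \sum_{n\ge 2}(-1)^n[\bS_n]$ lands in $K_0(\Omega_{[0,4]}\cN)$ rather than merely $K_0(\Omega_{[0,7]}\cN)$. This is true because for a length-two $\bP$ the complexes $\bS_n$ appearing in \cref{lem:dim4} are already supported on $[0,4]$ by construction; the only place where $[0,7]$ entered was the well-definedness argument in \cref{prop:split}, which compared $\bS_k' \oplus (\bS_{k+1}[2])$ with $\bS_k \oplus (\bS_{k+1}'[2])$ inside a double complex. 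One must check that \cref{thm:nenashev_vs_grayson} — via the presentation of $K_0(\Omega\cN)$ by length-two complexes and Nenashev-type double-complex relations — allows this comparison to be carried out within $K_0(\Omega_{[0,4]}\cN)$, using \cref{lem:ordertwo} and \cref{lem:shifting} as before but with the tighter support bound. Once this verification is in place, the rest is a routine repetition of the inductive argument from the proof of \cref{thm:dim7}.
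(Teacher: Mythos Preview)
Your treatment of surjectivity for $n=1$ and the inductive scheme for $n>1$ are both correct and match the paper. The problem is your construction of the $[0,4]$-section for $n=1$. You propose to combine the surjection $K_0(\Omega_{[0,2]}\cN)\twoheadrightarrow K_0(\Omega\cN)$ with ``a choice of section into $K_0(\Omega_{[0,2]}\cN)$'' and then feed the result into the formula $[\bP]\mapsto\sum(-1)^n[\bS_n]$ from \cref{prop:split}. But a set-theoretic section of a surjection of abelian groups is neither a homomorphism nor natural in $\cN$, so this does not produce a natural section. You then try to rescue this by arguing that the well-definedness proof of \cref{prop:split} can be redone inside $K_0(\Omega_{[0,4]}\cN)$, but you do not carry this out, and the specific argument there (comparing $\bS_k'\oplus(\bS_{k+1}[2])$ with $\bS_k\oplus(\bS_{k+1}'[2])$ via a double complex of width $7$) does not obviously compress. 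Note also that for a length-two $\bP$ one has $J_n=K_n=0$ for $n\geq 2$, so the formula $\sum(-1)^n[\bS_n]$ simply returns $[\bP]$; the shortening map is the identity on such inputs and contributes nothing.

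The paper's argument avoids all of this. The point is that $\Phi$ itself already factors as
\[
 K_1^N(\cN)\xrightarrow{\ \Phi'\ } K_0(\Omega_{[0,4]}\cN)\xrightarrow{\ i\ } K_0(\Omega\cN),
\]
because the defining relations of $K_1^N(\cN)$ hold in $K_0(\Omega_{[0,4]}\cN)$: the diagonal relation is immediate, and the $3\times 3$ Nenashev relation holds by \cref{lem:nena} since the total complex of a $3\times 3$ binary acyclic double complex is supported on $[0,4]$. Once you know $\Phi=i\circ\Phi'$ is an isomorphism, $\Phi'\circ\Phi^{-1}$ is the desired natural section of $i$. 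This is a one-line observation requiring no revisiting of \cref{prop:split}; replace your section argument with it and the rest of your proof goes through.
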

\begin{proof}
By \cref{thm:nenashev_vs_grayson}, $\Phi$ is an isomorphism.
Since $K_0(\Omega_{[0,2]}\cN) \to K_1^N(\cN)$ is a surjection, so is $K_0(\Omega_{[0,2]}\cN)\to K_0(\Omega\cN)$. By \cref{lem:nena}, $\Phi$ factors as
 \[ \Phi \colon K_1^N(\cN) \to K_0(\Omega_{[0,4]}\cN) \to K_0(\Omega\cN). \]
This exhibits $K_0(\Omega\cN)$ as a natural retract of $K_0(\Omega_{[0,4]}\cN)$.
 For $n > 1$, the claim follows as in the proof of \cref{thm:dim7} by induction.
\end{proof}

Hence, \cref{thm:nenashev_vs_grayson} also proves that the algebraic $K$-theory functor commutes with infinite products.

In the remainder of this section, we give a proof of \cref{thm:nenashev_vs_grayson}.
As in the proof of \cref{thm:dim7}, this will be accomplished by producing an explicit formula
that expresses the class of an arbitrary binary acyclic complex in terms of binary acyclic complexes of length two.

Before we start shortening binary acyclic complexes, we make a quick observation about $K_1^N(\cN)$, which we will need later in the argument.

\begin{lemma}\label{lem:signs}
 For any binary acyclic complex of length two, we have
 \[ [ \begin{tikzcd} P_2 \ar[r, rightarrowtail, shift right, "d_2'"']\ar[r, rightarrowtail, shift left, "d_2"] & P_1\ar[r, twoheadrightarrow, shift right, "d_1'"']\ar[r, twoheadrightarrow, shift left, "d_1"] & P_0\end{tikzcd} ]
  = - [ \begin{tikzcd} P_2 \ar[r, rightarrowtail, shift right, "d_2"']\ar[r, rightarrowtail, shift left, "d_2'"] & P_1\ar[r, twoheadrightarrow, shift right, "d_1"']\ar[r, twoheadrightarrow, shift left, "d_1'"] & P_0 \end{tikzcd} ]\in K_1^N(\cN).
 \]
\end{lemma}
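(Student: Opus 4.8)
The plan is to exhibit a binary acyclic double complex (in the sense of \cref{rem:binarydouble_notation}) whose associated Nenashev relation directly yields the claimed sign-change identity. The idea is that swapping the top and bottom differentials throughout a length-two complex should be realized as a vertical map in a $3 \times 3$ binary double complex, so that relation (2) in the definition of $K_1^N(\cN)$ collapses, after discarding degenerate rows and columns, to the desired statement.

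First I would set up the double complex so that the middle row is the given complex $\bP$ with top differential $(d_2, d_1)$ and bottom differential $(d_2', d_1')$, while the bottom row is the ``swapped'' complex with top differential $(d_2', d_1')$ and bottom differential $(d_2, d_1)$ — i.e. the complex appearing with a minus sign on the right-hand side. The top row I would take to be a degenerate complex (e.g.\ built from copies of $P_2, P_1, P_0$ with identity differentials, or simply a zero object in the relevant spots) chosen so that the vertical binary acyclic complexes in each column — which must have $d^v$ commuting with the top horizontal differentials and $d^{\prime,v}$ commuting with the bottom horizontal differentials — actually exist. The subtlety is precisely that a \emph{single} pair of vertical maps $(d^v, d^{\prime,v})$ in each column has to be simultaneously compatible with $d^h$ via $d^v$ and with $d^{\prime,h}$ via $d^{\prime,v}$; because the top and bottom differentials get interchanged between the middle and bottom rows, one is forced to use a genuinely ``mixed'' vertical differential (identity on one copy, or an automorphism switching summands, reminiscent of the $\tau_J$ trick in \cref{lem:ordertwo}). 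I expect this compatibility bookkeeping — choosing the third row/column and the vertical maps so that all four binary-complex conditions hold on the nose — to be the main obstacle.

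Once such a double complex is in hand, I would apply relation (2): the left-hand side $[\bP_0] - [\bP_1] + [\bP_2]$ (the columns) will consist of complexes whose top and bottom differentials coincide, hence vanish by relation (1); and on the right-hand side $[\bP'] - [\bP] + [\bP'']$, the auxiliary rows $\bP'$ and $\bP''$ are the degenerate ones, again zero by relation (1), leaving exactly $-[\bP_{\mathrm{middle}}] = -[\bP]$, or after rearranging, $[\bP] + [\bP_{\mathrm{swapped}}] = 0$, which is the assertion. Alternatively — and this may be cleaner — I would first record that an isomorphism of binary acyclic complexes induces equality of classes in $K_1^N(\cN)$ (the flip isomorphism $\tau$ on each $P_i$ intertwines a complex with one having its differentials conjugated), reducing the problem to a statement purely about swapping differentials without any conjugation, and then only the double-complex argument above is needed. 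I would keep the diagram small by exploiting that $d_1, d_1'$ are admissible epimorphisms and $d_2, d_2'$ admissible monomorphisms, so the degenerate auxiliary complexes can be taken to have the form $0 \to P_i \xrightarrow{\id} P_i$ or similar, making relations (1) apply transparently.
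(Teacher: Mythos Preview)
Your overall framework is right---one double complex plus the defining relation~(2)---and you have correctly identified the crux: the vertical maps must intertwine the top horizontals via $d^v$ and the bottom horizontals via $d^{\prime,v}$, while the two rows you want to compare have their top/bottom horizontals \emph{swapped}. But the concrete plan you describe cannot be completed. If the middle row is $\bP$ (top horizontal $d$, bottom $d'$) and the bottom row is the swapped complex (top horizontal $d'$, bottom $d$), then the required vertical map $d^v \colon P_i \to P_i$ must satisfy $d^v \circ d_i = d_i' \circ d^v$, i.e.\ it must intertwine $d$ and $d'$. No such map exists in general. Your ``alternative'' plan has the same problem: there is no isomorphism of binary acyclic complexes between $\bP$ and its swap, and the flip $\tau$ you mention lives on $P_i \oplus P_i$, not on $P_i$.

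The fix is precisely to \emph{double}. Take as one row the direct sum $\bP \oplus \bP_{\mathrm{swap}}$ on objects $P_i \oplus P_i$ (top horizontal $d \oplus d'$, bottom horizontal $d' \oplus d$), and as the other row the degenerate complex with both horizontals equal to $d \oplus d'$; now the pair of vertical maps $(\id, \tau_{P_i})$ does satisfy the required compatibilities, since $\tau$ conjugates $d' \oplus d$ into $d \oplus d'$. This is exactly the double complex the paper writes down. One further point: in this corrected construction the columns do \emph{not} have coinciding differentials---they are $(P_i \oplus P_i \xrightarrow{\id,\ \tau} P_i \oplus P_i)$---so your expectation that the column side of relation~(2) vanishes by relation~(1) is wrong. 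You need one more (easy) instance of relation~(2), applied to a double complex whose rows are degenerate and whose columns are these $(\id,\tau)$-complexes for $i=0,1,2$, using the short exact sequence $P_2 \rightarrowtail P_1 \twoheadrightarrow P_0$, to see that the alternating sum of the column classes is zero.
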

\begin{proof}
 This follows directly from applying the defining relations of $K_1^N$ to the binary acyclic double complex
 \[\begin{tikzcd}[column sep = large, baseline=(current bounding box.south)]
    P_2 \oplus P_2 \ar[r, rightarrowtail, shift right, "d_2' \oplus d_2"']\ar[r, rightarrowtail, shift left, "d_2 \oplus d_2'"]\ar[d, rightarrowtail, shift right, "\id"']\ar[d, rightarrowtail, shift left, "\tau_{P_2}"] & P_1\oplus P_1\ar[r, twoheadrightarrow, shift right, "d_1' \oplus d_1"']\ar[r, twoheadrightarrow, shift left, "d_1 \oplus d_1'"]\ar[d, rightarrowtail, shift right, "\id"']\ar[d, rightarrowtail, shift left, "\tau_{P_1}"] & P_0 \oplus P_0\ar[d, rightarrowtail, shift right, "\id"']\ar[d, rightarrowtail, shift left, "\tau_{P_0}"]  \\
    P_2 \oplus P_2 \ar[r, rightarrowtail, shift right, "d_2 \oplus d_2'"']\ar[r, rightarrowtail, shift left, "d_2 \oplus d_2'"] & P_1\oplus P_1\ar[r, twoheadrightarrow, shift right, "d_1 \oplus d_1'"']\ar[r, twoheadrightarrow, shift left, "d_1 \oplus d_1'"] & P_0 \oplus P_0
   \end{tikzcd}\qedhere\]
\end{proof}

Let $\bP:=(P_*, d, d')$ be a binary acyclic complex.
In a first step we will not shorten $\bP$ but produce a complex $\widehat\bP$ representing the same class in $K_0(\Omega\cN)$, which we will then be able to shorten.

Choose factorizations $d_2 \colon P_2 \twoheadrightarrow J \rightarrowtail P_1$ and $d_2' \colon P_2 \twoheadrightarrow K \rightarrowtail P_{1}$.
Since $J$ and $K$ both are the kernel of an admissible epimorphism $P_1 \twoheadrightarrow P_0$, they represent the same class in $K_0(\cN)$.
Therefore, there exist by \cref{lem:hellerscriterion} $A,B,S\in\cN$ and exact sequences 
\[\begin{tikzcd}
A\ar[r, rightarrowtail] & J \oplus S\ar[r, twoheadrightarrow] & B
\end{tikzcd}
\quad\text{and}\quad
\begin{tikzcd}
A\ar[r, rightarrowtail] & K \oplus S\ar[r, twoheadrightarrow] & B.
\end{tikzcd}\]
Let $\bS$ denote the binary acyclic complex
\[\begin{tikzcd}
A\ar[r, shift left]\ar[r, shift right]&K\oplus S\oplus J\ar[r, shift left]\ar[r, shift right]&B\oplus P_1\ar[r, shift left]\ar[r, shift right]&P_0
\end{tikzcd}\]
consisting of top differential
\[\begin{tikzcd}[row sep= tiny]
A\ar[r, rightarrowtail]&K\oplus S\ar[r, twoheadrightarrow]\ar[d, phantom, "\oplus"]& B\ar[d, phantom, "\oplus"]&\\
&J\ar[r, rightarrowtail]&P_1\ar[r, "d_1"]& P_0
\end{tikzcd}\]
and bottom differential
\[\begin{tikzcd}[row sep= tiny]
A\ar[r, rightarrowtail]&J\oplus S\ar[r, twoheadrightarrow]\ar[d, phantom, "\oplus"]& B\ar[d, phantom, "\oplus"]&\\
&K\ar[r, rightarrowtail]&P_1\ar[r, "d_1'"]& P_0.
\end{tikzcd}\]
Let $\bP'$ denote the binary acyclic complex
\[\begin{tikzcd}
\ldots\ar[r, shift left]\ar[r, shift right]&P_3\ar[r, shift left]\ar[r, shift right]&P_2\oplus A\ar[r, shift left]\ar[r, shift right]&J\oplus K\oplus S\ar[r, shift left]\ar[r, shift right]&B
\end{tikzcd}\]
with top differential
\[\begin{tikzcd}[row sep= tiny]
\ldots\ar[r]&P_3\ar[r, "d_3"]&P_{2}\ar[r, twoheadrightarrow, "d_{2}"]\ar[d, phantom, "\oplus"]&J\ar[d, phantom, "\oplus"]&\\
&&A\ar[r, rightarrowtail]&K\oplus S\ar[r, twoheadrightarrow]& B
\end{tikzcd}\]
and bottom differential
\[\begin{tikzcd}[row sep= tiny]
\ldots\ar[r]&P_3\ar[r, "d'_3"]&P_{2}\ar[r, twoheadrightarrow, "d'_{2}"]\ar[d, phantom, "\oplus"]&K\ar[d, phantom, "\oplus"]&\\
&&A\ar[r, rightarrowtail]&J\oplus S\ar[r, twoheadrightarrow]& B
\end{tikzcd}\]
For an object $M\in\cN$ we denote by $\Delta_M$ the binary acyclic complex
\[\begin{tikzcd}
M\ar[r, shift left, "\id_M"]\ar[r, shift right, "\id_M"']&M.
\end{tikzcd}\]
Note that $[\Delta_M]=0\in K_0(\Omega\cN)$.

Consider the following binary acyclic double complex. All differentials written as a single arrow are the identity on the summand appearing in domain and codomain and zero on all other summands. In particular, both differentials agree in this case. The remaining four non-trivial binary acyclic complexes are $\bP\oplus\Delta_B, \bP'$ and $\bS$.
\[\begin{tikzcd}
&&A\ar[d]\ar[r]&A\ar[d, shift left]\ar[d, shift right]&[+5pt]\\
\ldots\ar[r, shift left]\ar[r, shift right]&P_3\ar[r, shift left]\ar[r, shift right]\ar[d]&P_2\oplus A\ar[r, shift left]\ar[r, shift right]\ar[d]&J\oplus K\oplus S\ar[r, shift left]\ar[r, shift right]\ar[d, shift left]\ar[d, shift right]&B\ar[d]\\
\ldots\ar[r, shift left]\ar[r, shift right]&P_3\ar[r, shift left]\ar[r, shift right]&P_2\ar[r, shift left]\ar[r, shift right]&P_1\oplus B\ar[r, shift left]\ar[r, shift right]\ar[d, shift left]\ar[d, shift right]&P_0\oplus B\ar[d]\\
&&&P_0\ar[r]&P_0\\
\end{tikzcd}\]
Applying Nenashev's relation (\cref{lem:nena}) and omitting all summands which are obviously zero, we obtain
\[[\bP] - [\bP'] = [\bS]. \]
Let $\widehat\bP$ denote the binary acyclic complex
\[\begin{tikzcd}
\ldots\ar[r, shift right]\ar[r, shift left]&P_3\ar[r, shift left]\ar[r, shift right]&P_2\oplus J\oplus K\ar[r, shift left]\ar[r, shift right]& J\oplus P_1\oplus K\ar[r, shift left]\ar[r, shift right]&P_0
\end{tikzcd}\]
with top differential
\[\begin{tikzcd}[row sep= tiny]
\ldots \ar[r]&P_3\ar[r, "d_3"]&P_2\ar[r, twoheadrightarrow, "d_2"]\ar[d, phantom, "\oplus"]&J\ar[d, phantom, "\oplus"]&\\
&&J\ar[r, rightarrowtail]\ar[d, phantom, "\oplus"]&P_1\ar[r, "d_1"]\ar[d, phantom, "\oplus"]&P_0\\
&&K\ar[r, "\id_K"]&K&
\end{tikzcd}\]
and bottom differential
\[\begin{tikzcd}[row sep= tiny]
\ldots \ar[r]&P_3\ar[r, "d'_3"]&P_2\ar[r, twoheadrightarrow, "d'_2"]\ar[d, phantom, "\oplus"]&K\ar[d, phantom, "\oplus"]&\\
&&K\ar[r, rightarrowtail]\ar[d, phantom, "\oplus"]&P_1\ar[r, "d'_1"]\ar[d, phantom, "\oplus"]&P_0\\
&&J\ar[r, "\id_J"]&J&
\end{tikzcd}\]
We can build the following binary acyclic double complex involving $\widehat \bP\oplus\Delta_B, \bP'\oplus\Delta_J[1]\oplus\Delta_K[1]$ and $\Delta_J[1]\oplus \bS\oplus \Delta_K[1]$---to recognize the last of these, identify $J\oplus P_1\oplus K\oplus B \cong J\oplus B\oplus P_1 \oplus K$.

\[\begin{tikzcd}
&&A\ar[d]\ar[r]&A\ar[d, shift left]\ar[d, shift right]&[+5pt]\\
\ldots\ar[r, shift left]\ar[r, shift right]&P_3\ar[r, shift left]\ar[r, shift right]\ar[d]&P_2\oplus A\oplus J\oplus K\ar[r, shift left]\ar[r, shift right]\ar[d]&J\oplus K\oplus S\oplus J\oplus K\ar[r, shift left]\ar[r, shift right]\ar[d, shift left]\ar[d, shift right]&B\ar[d]\\
\ldots\ar[r, shift left]\ar[r, shift right]&P_3\ar[r, shift left]\ar[r, shift right]&P_2\oplus J\oplus K\ar[r, shift left]\ar[r, shift right]&J\oplus P_1\oplus K\oplus B\ar[r, shift left]\ar[r, shift right]\ar[d, shift left]\ar[d, shift right]&P_0\oplus B\ar[d]\\
&&&P_0\ar[r]&P_0\\
\end{tikzcd}\]

Applying Nenashev's relation (\cref{lem:nena}) and omitting all summands which are obviously zero, we obtain
\[[\widehat\bP] - [\bP'] = [\bS]\]
and thus $[\widehat{\bP}]=[\bP]$.

Let $\bbQ$ denote the binary acyclic complex
\[\begin{tikzcd}
\ldots\ar[r, shift right]\ar[r, shift left]&P_4\ar[r, shift right]\ar[r, shift left]&P_3\oplus K\oplus J\ar[r, shift left]\ar[r, shift right]&P_2\oplus P_1\oplus J\oplus K\ar[r, shift left]\ar[r, shift right]&J\oplus P_0\oplus K
\end{tikzcd}\]
with top differential
\[\begin{tikzcd}[row sep= tiny]
\ldots \ar[r]&P_4\ar[r, "d_4"]&P_3\ar[r, "d_3"]\ar[d, phantom, "\oplus"]&P_2\ar[r, twoheadrightarrow, "d_2"]\ar[d, phantom, "\oplus"]&J\ar[d, phantom, "\oplus"]\\
&&K\ar[r, rightarrowtail]\ar[d, phantom, "\oplus"]&P_1\ar[r, "d_1'"]\ar[d, phantom, "\oplus"]&P_0\ar[dd, phantom, "\oplus"]\\
&&J\ar[r, "\id_J"]&J\ar[d, phantom, "\oplus"]&\\
&&&K\ar[r, "\id_K"]&K
\end{tikzcd}\]
and bottom differential
\[\begin{tikzcd}[row sep= tiny]
\ldots \ar[r]&P_4\ar[r, "d'_4"]&P_3\ar[r, "d'_3"]\ar[d, phantom, "\oplus"]&P_2\ar[r, twoheadrightarrow, "d'_2"]\ar[d, phantom, "\oplus"]&K\ar[d, phantom, "\oplus"]\\
&&J\ar[r, rightarrowtail]\ar[d, phantom, "\oplus"]&P_1\ar[r, "d_1"]\ar[d, phantom, "\oplus"]&P_0\ar[dd, phantom, "\oplus"]\\
&&K\ar[r, "\id_K"]&K\ar[d, phantom, "\oplus"]&\\
&&&J\ar[r, "\id_J"]&J
\end{tikzcd}\]
Let us fix the following notation:
If $M$ is an object containing $N$ as a direct summand, denote by $e_N$ the obvious idempotent $M \to M$ whose image is $N$.

Consider the following double complex involving $\widehat \bP$ and $\bbQ[1]$. Note that only the rows are acyclic. This suffices to see that the total complex shifted down by one represents the same class as $[\bbQ]+[\widehat\bP]$.
{\small\[\begin{tikzcd}
\ldots\ar[r, shift left]\ar[r, shift right]&P_4\ar[d]\ar[r, shift left]\ar[r, shift right]&P_3\ar[d]\ar[r, shift left]\ar[r, shift right]&P_2\oplus J\oplus K\ar[d, "e_{P_2}"]\ar[r, shift left]\ar[r, shift right]&J\oplus P_1\oplus K\ar[d, shift right, "e_J"']\ar[d, shift left, "e_K"]\ar[r, shift left]\ar[r, shift right]&P_0\\
\ldots\ar[r, shift left]\ar[r, shift right]&P_4\ar[r, shift left]\ar[r, shift right]&P_3\oplus K\oplus J\ar[r, shift left]\ar[r, shift right]&P_2\oplus P_1\oplus J\oplus K\ar[r, shift left]\ar[r, shift right]&J\oplus P_0\oplus K&
\end{tikzcd}\]}
Let $\bT$ denote the total complex shifted down by one. Then $\bT$ is
{\footnotesize\[\begin{tikzcd}
\ldots\ar[r, shift left]\ar[r, shift right]&P_4\oplus P_3\ar[r, shift left]\ar[r, shift right]& \begin{array}{c}
P_3\oplus K\oplus J\oplus\\ P_2\oplus J\oplus K
\end{array}
\ar[r, shift left]\ar[r, shift right]&
\begin{array}{c}
P_2\oplus P_1\oplus J\oplus \\K\oplus J\oplus P_1\oplus K
\end{array}\ar[r, shift left]\ar[r, shift right]& J\oplus P_0\oplus K\oplus P_0.
\end{tikzcd}\]}
Assume that $\bP$ was supported on $[0,m]$, then $\bT$ admits a projection onto $\Delta_{P_m}[m-1]$. The kernel of this projection admits a projection to $\Delta_{P_{m-1}}[m-2]$ and so on until we take the kernel of the projection to $\Delta_{P_2}[1]$. The remaining acyclic binary complex $\bT'$ is
\[\begin{tikzcd}
K\oplus J\oplus J\oplus K\ar[r, shift left]\ar[r, shift right]& P_1\oplus J\oplus K\oplus J\oplus P_1\oplus K\ar[r, shift left]\ar[r, shift right]& J\oplus P_0\oplus K\oplus P_0
\end{tikzcd}\]
with top differential
\[\begin{tikzcd}[row sep=tiny, column sep=large]
		& P_1\ar[rdd]	& 	\\
K\ar[ru] 	& J		& J 	\\
J\ar[ru] 	& K\ar[rd]	& P_0	\\
J\ar[rd]	& J\ar[ruu] 	& K	\\
K\ar[rd] 	& P_1\ar[r] 	& P_0	\\
 		& K		& 				
\end{tikzcd}\]
and bottom differential
\[\begin{tikzcd}[row sep= tiny]
		& P_1\ar[rdd]	& 	\\
K\ar[rd] 	& J\ar[r]	& J 	\\
J\ar[ruu] 	& K		& P_0	\\
J\ar[r]		& J	 	& K	\\
K\ar[r] 	& P_1\ar[r]	& P_0	\\
 		& K\ar[ruu]	& 				
\end{tikzcd}\]
It follows that $[\bP]=[\bT']-[\bQ]$.
Since $\bT'$ is supported on $[0,2]$ and $\bbQ$ has length one shorter than $\bP$, iterating this argument already shows that $K_0(\Omega_{[0,2]}\cN)\to K_0(\Omega\cN)$ is surjective.

\begin{rem}
	\label{rem:grayson}
	The idea to use the complexes $\widehat{\bP},\bT$ and $\bT'$ is from the aforementioned, unpublished result of Grayson. He uses a different argument to compute $[\widehat{\bP}]-[\bP]$ which only shows that it is contained in the image of $\Phi$. 
\end{rem}

We now want to simplify $\bT'$. Let $\bT_{triv}'$ denote the binary acyclic complex whose underlying graded object is that of $\bT'$, but with both differentials equal to the top differential of $\bT'$.
Then the following diagram, where the upper row is $\bT'$ and the second row is $\bT'_{triv}$, commutes.
\[\begin{tikzcd}
K\oplus J\oplus J\oplus K\ar[d, shift right, "\id"']\ar[d, shift left, "\tau_K\oplus\tau_J"]\ar[r, shift left]\ar[r, shift right]&P_1\oplus J\oplus K\oplus J\oplus P_1\oplus K\ar[d, shift right, "\id"']\ar[d, shift left, "\tau_{P_1}\oplus\tau_J\oplus\tau_K"]\ar[r, shift left]\ar[r, shift right]& J\oplus P_0\oplus K\oplus P_0\ar[d, shift right, "\id"']\ar[d, shift left, "\tau_{P_0}"] \\
K\oplus J\oplus J\oplus K\ar[r, shift left]\ar[r, shift right]&P_1\oplus J\oplus K\oplus J\oplus P_1\oplus K\ar[r, shift left]\ar[r, shift right]& J\oplus P_0\oplus K\oplus P_0
\end{tikzcd}\]
Both differentials of $\bT'_{triv}$ agree and thus it represents the trivial class. Since $\tau_K$ and $\tau_J$ are of order two, we conclude from \cref{lem:nena} that
\[ [\bT'] = [ \begin{tikzcd} P_0\oplus P_0\ar[r, shift left, "\id"]\ar[r, shift right, "\tau_{P_0}"']&P_0\oplus P_0 \end{tikzcd}]-[\begin{tikzcd} P_1\oplus P_1\ar[r, shift left, "\id"]\ar[r, shift right, "\tau_{P_1}"']&P_1\oplus P_1 \end{tikzcd}]. \]
Since $J\rightarrowtail P_1\twoheadrightarrow P_0$ is exact, this is the same as
$[\begin{tikzcd} J\oplus J\ar[r, shift left, "\id"]\ar[r, shift right, "\tau_{J}"']&J\oplus J \end{tikzcd}]$.
This shows that
\begin{equation}\label{eq:shortening}
 [\bP]=[\begin{tikzcd} J\oplus J\ar[r, shift left, "\id"]\ar[r, shift right, "\tau_{J}"']&J\oplus J \end{tikzcd}]-[\bbQ].
\end{equation}
We are now going to iterate this argument.
Choose factorizations $d_n \colon P_n \twoheadrightarrow J_{n-1} \rightarrowtail P_{n-1}$ and $d_n' \colon P_n \twoheadrightarrow K_{n-1} \rightarrowtail P_{n-1}$ for all $n \geq 2$
such that $J_n \rightarrowtail P_n \twoheadrightarrow J_{n-1}$ and $K_n \rightarrowtail P_n \twoheadrightarrow K_{n-1}$ are exact for all $n$. Set $J_0 := P_0$ and $K_0 := P_0$.
For any natural number $k$, fix the following auxiliary notation:
\begin{align*}
 J_\odd^k &:= \bigoplus_{\substack{n \leq k,\\ n\ \text{odd}}} J_n, &J_\even^k &:= \bigoplus_{\substack{2 \leq n \leq k,\\ n\ \text{even}}} J_n, &J_{\even,0}^k &:= \bigoplus_{\substack{n \leq k,\\ n\ \text{even}}} J_n,\\
 K_\odd^k &:= \bigoplus_{\substack{n \leq k,\\ n\ \text{odd}}} K_n, &K_\even^k &:= \bigoplus_{\substack{2 \leq n \leq k,\\ n\ \text{even}}} K_n, &K_{\even,0}^k &:= \bigoplus_{\substack{n \leq k,\\ n\ \text{even}}} K_n, \\
 P_\odd^k &:= \bigoplus_{\substack{n \leq k,\\ n\ \text{odd}}} P_n, & P_\even^k &:= \bigoplus_{\substack{2 \leq n \leq k,\\ n\ \text{even}}} P_n.
\end{align*}
First of all, we define for every natural number $k$ a binary acyclic complex $\bP_k$ of the form
{\tiny \[\begin{tikzcd}
	\ldots\ar[r, shift left]\ar[r, shift right]&P_{k+3}\ar[r, shift left]\ar[r, shift right]&
	\begin{array}{c}
	P_{k+2}\oplus \\ \bigoplus_{n=1}^{k}(J_n\oplus K_n)
	\end{array}\ar[r, shift left]\ar[r, shift right]&
	\begin{array}{c}
	\bigoplus_{n=1}^{k+1} P_{n} \oplus \\ \bigoplus_{n=1}^{k} (J_n \oplus K_n)
	\end{array}\ar[r, shift left]\ar[r, shift right]&
	\begin{array}{c}
	P_0 \oplus \\ \bigoplus_{n=1}^{k} (J_n \oplus K_n)
	\end{array}\end{tikzcd} \]}
For even natural numbers $k$, we equip $\bP_k$ with the top differential
\[\begin{tikzcd}[row sep=tiny]
	\ldots\ar[r]&P_{k+3}\ar[r, "d_{k+3}"]&P_{k+2}\ar[r, "d_{k+2}"]\ar[d, phantom, "\oplus"]&P_{k+1}\ar[r, "d_{k+1}"]\ar[d, phantom, "\oplus"]&J_{k}\ar[ddd, phantom, "\oplus"]\\
	&&K_\odd^k\ar[r]\ar[d, phantom, "\oplus"]&K_\odd^k\ar[d, phantom, "\oplus"]&\\
	&&J_\even^{k}\ar[r]\ar[ddd, phantom, "\oplus"]&J_\even^{k}\ar[d, phantom, "\oplus"]&\\
	&&&K_\even^{k}\ar[r]\ar[d, phantom, "\oplus"]&K_\even^{k}\ar[d, phantom, "\oplus"]\\
	&&&J_\odd^k\ar[r]\ar[d, phantom, "\oplus"]&J_\odd^k\ar[d, phantom, "\oplus"]\\
	&&J_\odd^k\ar[r]\ar[d, phantom, "\oplus"]&P_\odd^k\ar[r]\ar[d, phantom, "\oplus"]&J_{\even,0}^{k-1}\ar[d, phantom, "\oplus"]\\
	&&K_\even^{k}\ar[r]&P_\even^{k}\ar[r]&K_\odd^{k}
	\end{tikzcd}\]
and bottom differential
\[\begin{tikzcd}[row sep=tiny]
	\ldots\ar[r]&P_{k+3}\ar[r, "d'_{k+3}"]&P_{k+2}\ar[r, "d'_{k+2}"]\ar[d, phantom, "\oplus"]&P_{k+1}\ar[r, "d'_{k+1}"]\ar[d, phantom, "\oplus"]&K_{k}\ar[ddd, phantom, "\oplus"]\\
	&&J_\odd^k\ar[r]\ar[d, phantom, "\oplus"]&J_\odd^k\ar[d, phantom, "\oplus"]&\\
	&&K_\even^{k}\ar[r]\ar[ddd, phantom, "\oplus"]&K_\even^{k}\ar[d, phantom, "\oplus"]&\\
	&&&J_\even^{k}\ar[r]\ar[d, phantom, "\oplus"]&J_\even^{k}\ar[d, phantom, "\oplus"]\\
	&&&K_\odd^k\ar[r]\ar[d, phantom, "\oplus"]&K_\odd^k\ar[d, phantom, "\oplus"]\\
	&&K_\odd^k\ar[r]\ar[d, phantom, "\oplus"]&P_\odd^k\ar[r]\ar[d, phantom, "\oplus"]&K_{\even,0}^{k-1}\ar[d, phantom, "\oplus"]\\
	&&J_\even^{k}\ar[r]&P_\even^{k}\ar[r]&J_\odd^{k}
	\end{tikzcd}\]
Note that $\bP_0$ is precisely the complex $\bP$.

For odd natural numbers $k$, we equip $\bP_k$ with the top differential
\[\begin{tikzcd}[row sep=tiny]
	\ldots\ar[r]&P_{k+3}\ar[r, "d_{k+3}"]&P_{k+2}\ar[r, "d_{k+2}"]\ar[d, phantom, "\oplus"]&P_{k+1}\ar[r, "d_{k+1}"]\ar[d, phantom, "\oplus"]&J_{k}\ar[ddd, phantom, "\oplus"]\\
	&&J_\odd^k\ar[r]\ar[d, phantom, "\oplus"]&J_\odd^k\ar[d, phantom, "\oplus"]&\\
	&&K_\even^{k}\ar[r]\ar[ddd, phantom, "\oplus"]&K_\even^{k}\ar[d, phantom, "\oplus"]&\\
	&&&J_\even^{k}\ar[r]\ar[d, phantom, "\oplus"]&J_\even^{k}\ar[d, phantom, "\oplus"]\\
	&&&K_\odd^k\ar[r]\ar[d, phantom, "\oplus"]&K_\odd^k\ar[d, phantom, "\oplus"]\\
	&&K_\odd^k\ar[r]\ar[d, phantom, "\oplus"]&P_\odd^k\ar[r]\ar[d, phantom, "\oplus"]&K_{\even,0}^{k}\ar[d, phantom, "\oplus"]\\
	&&J_\even^{k}\ar[r]&P_\even^{k}\ar[r]&J_\odd^{k-1}
	\end{tikzcd}\]
and bottom differential
\[\begin{tikzcd}[row sep=tiny]
	\ldots\ar[r]&P_{k+3}\ar[r, "d'_{k+3}"]&P_{k+2}\ar[r, "d'_{k+2}"]\ar[d, phantom, "\oplus"]&P_{k+1}\ar[r, "d'_{k+1}"]\ar[d, phantom, "\oplus"]&K_{k}\ar[ddd, phantom, "\oplus"]\\
	&&K_\odd^k\ar[r]\ar[d, phantom, "\oplus"]&K_\odd^k\ar[d, phantom, "\oplus"]&\\
	&&J_\even^{k}\ar[r]\ar[ddd, phantom, "\oplus"]&J_\even^{k}\ar[d, phantom, "\oplus"]&\\
	&&&K_\even^{k}\ar[r]\ar[d, phantom, "\oplus"]&K_\even^{k}\ar[d, phantom, "\oplus"]\\
	&&&J_\odd^k\ar[r]\ar[d, phantom, "\oplus"]&J_\odd^k\ar[d, phantom, "\oplus"]\\
	&&J_\odd^k\ar[r]\ar[d, phantom, "\oplus"]&P_\odd^k\ar[r]\ar[d, phantom, "\oplus"]&J_{\even,0}^{k}\ar[d, phantom, "\oplus"]\\
	&&K_\even^{k}\ar[r]&P_\even^{k}\ar[r]&K_\odd^{k-1}
	\end{tikzcd}\]
Note that $\bP_1$ is precisely the complex $\bQ$ appearing in \eqref{eq:shortening}.
Moreover, if $k$ is sufficiently large so that $P_n \cong 0$ for all $n > k$, then $\bP_{k+1}$ is obtained from $\bP_k$ by interchanging the top and bottom differential.

For every $k$ let $\bQ_k$ denote the complex obtained from $\bP_k$ by the same procedure as $\bQ$ is obtained from $\bP$.

Suppose now that $k$ is odd. Substituting appropriately in \eqref{eq:shortening}, we obtain the equation
\[ [\bP_k] = [ \begin{tikzcd} X\oplus X\ar[r, shift left, "\id"]\ar[r, shift right, "\tau_{X}"']&X\oplus X \end{tikzcd} ] - [\bQ_k] \in K_0(\Omega\cN), \]
where $X$ denotes the kernel of the first top differential of $\bP_k$.

By the definition of the binary acyclic complex $\bP_k$, we may choose
\[ X := J_{k+1} \oplus \bigoplus_{n=1}^k(J_n \oplus K_n). \]
As in the proof of \cref{prop:split}, since $J_n$ and $K_n$ represent the same class in $K_0$ for all $n$, we have by \cref{lem:ordertwo}
\begin{eqnarray}
&[\begin{tikzcd}
(J_n\oplus K_n)\oplus (J_n\oplus K_n)\ar[r, shift left, "\id"]\ar[r, shift right, "\tau_{J_n\oplus K_n}"']&(J_n\oplus K_n)\oplus(J_n\oplus K_n)
\end{tikzcd}]\nonumber\\
&\label{eq:tauZ=0}=2\cdot[\begin{tikzcd}
J_n\oplus J_n\ar[r, shift left, "\id"]\ar[r, shift right, "\tau_{J_n}"']&J_n\oplus J_n
\end{tikzcd} ]=0.
\end{eqnarray}
Therefore,
\[[ \begin{tikzcd} X\oplus X\ar[r, shift left, "\id"]\ar[r, shift right, "\tau_{X}"']&X\oplus X \end{tikzcd} ]=[ \begin{tikzcd} J_{k+1}\oplus J_{k+1}\ar[r, shift left, "\id"]\ar[r, shift right, "\tau_{J_{k+1}}"']&J_{k+1}\oplus J_{k+1} \end{tikzcd} ].\]
Similarly,
\[ Y := K_{k+1} \oplus \bigoplus_{n=1}^k(J_n \oplus K_n) \]
is the kernel of the first bottom differential of $\bP_k$.
Note that the complement of $J_{k+1}$ in $X$ and the complement of $K_{k+1}$ in $Y$ are the same; let $Z$ denote that complement. 

Unwinding the definition of $\bQ_k$, we see that, up to automorphisms flipping the two copies of $Z$ in the three lowest degrees of $\bQ_k$,
$\bQ_k$ coincides with the sum of $\bP_{k+1}$ with some complexes in the image of the diagonal functor $\Delta$.
Since, by \eqref{eq:tauZ=0}, $[ \begin{tikzcd} Z\oplus Z\ar[r, shift left, "\id"]\ar[r, shift right, "\tau_{Z}"']&Z\oplus Z \end{tikzcd} ] = 0$,
we see that $[\bQ_k] = [\bP_{k+1}]$. Hence,
\[ [\bP_k] = [ \begin{tikzcd} J_{k+1}\oplus J_{k+1}\ar[r, shift left, "\id"]\ar[r, shift right, "\tau_{J_{k+1}}"']&J_{k+1}\oplus J_{k+1} \end{tikzcd} ] - [\bP_{k+1}]. \]
The argument for $k$ even is completely analogous.
Therefore, we have for every $k \geq 0$ the equation
\[ [\bP] = x(\bP,k) := (-1)^{k}[\bP_{k}] + \sum_{n=1}^{k} [\begin{tikzcd}
J_{n}\oplus J_{n}\ar[r, shift left, "\id"]\ar[r, shift right, "\tau_{J_{n}}"']&J_{n}\oplus J_{n}
\end{tikzcd}]. \]

\begin{proof}[Proof of \cref{thm:nenashev_vs_grayson}]
 Define a map $\Psi \colon K_0(\Omega\cN)\to K_1^N(\cN)$ by the rule
 \[ [ \bP ] \mapsto x(\bP, k(\bP)), \]
 where $k(\bP)$ is defined to be
 \[ k(\bP) := \min \{ n \in \N \mid P_{n'} \cong 0\ \text{for all}\ n' > n \}. \]
 We have to show that this is a well-defined homomorphism.
 By our definition of $k(\bP)$, the complex $\bP_{k(\bP)}$ has length two. 
 
 Let $\bP' \rightarrowtail \bP \twoheadrightarrow \bP''$ be an exact sequence of binary acyclic complexes.
 Evidently, $x(\bP',k(\bP)) + x(\bP'',k(\bP)) = x(\bP,k(\bP))$.
 Note that $k(\bP'), k(\bP'') \leq k(\bP)$ and at least one of $k(\bP')$ and $k(\bP'')$ equals $k(\bP)$.
 If $k(\bP') = k(\bP) = k(\bP'')$, we already have $\Psi([\bP']) + \Psi([\bP'']) = \Psi([\bP])$.
 Suppose $k(\bP') < k(\bP)$.
 Then $\bP_{k(\bP)}$ arises from $\bP_{k(\bP')}$ by interchanging the role of top and bottom differential $k(\bP) - k(\bP')$ times.
 Since interchanging the top and bottom differential results only in a change of sign (\cref{lem:signs}) and $J'_n \cong 0$ for $n > k(\bP')$,
 we have $x(\bP',k(\bP')) = x(\bP',k(\bP))$.
 The case $k(\bP'') < k(\bP)$ is analogous.
  
 Suppose now that $\bP$ lies in the image of the diagonal functor $\Delta \colon C^q\cN \to B^q\cN$.
 Then we may choose $J_n = K_n$ for all $n$.
 In this case, the top and bottom differential of $\bP_{k(\bP)}$ are isomorphic.
 However, the two differentials do not agree on the nose but only after flipping all appearing $K_n=J_n$.
 Since each one of these appears three times in $\bP_{k(\bP)}$, applying the Nenashev relation we see that
 \[ [ \bP_{k(\bP)} ] = \sum_{n=1}^{k(\bP)}[\begin{tikzcd} J_{n}\oplus J_{n}\ar[r, shift left, "\id"]\ar[r, shift right, "\tau_{J_{n}}"']&J_{n}\oplus J_{n} \end{tikzcd}]. \]
 Consequently, $\Psi([\bP]) = 0$ by \cref{lem:ordertwo}. 
 This shows that $\Psi$ is a well-defined homomorphism $K_0(\Omega\cN) \to K_1^N(\cN)$.
 
 Our previous discussion implies that $\Phi \circ \Psi = \id_{K_0(\Omega\cN)}$.
 What is left to do is to show that $\Psi \circ \Phi = \id_{K_1^N(\cN)}$.
 To do so, it suffices to establish \cref{eq:shortening} in $K_1^N(\cN)$ for all binary acyclic complexes of length two.
 Let 
 \[ \bP = \begin{tikzcd} P_2 \ar[r, rightarrowtail, shift right, "d_2'"']\ar[r, rightarrowtail, shift left, "d_2"] & P_1\ar[r, twoheadrightarrow, shift right, "d_1'"']\ar[r, twoheadrightarrow, shift left,"d_1"] & P_0 \end{tikzcd}\]
 be a binary acyclic complex of length two.
 Then $\bP_1$ is the binary acyclic complex
 \[ \begin{tikzcd} P_2 \oplus P_2 \ar[r, rightarrowtail, shift right]\ar[r, rightarrowtail, shift left] & P_2 \oplus P_2 \oplus P_2 \oplus P_1\ar[r, twoheadrightarrow, shift right]\ar[r, twoheadrightarrow, shift left] & P_2 \oplus P_2 \oplus P_0 \end{tikzcd} \]
 with the following top and bottom differentials:
 \[\begin{tikzcd}[row sep=tiny]
    & P_2\ar[r] & P_2 \\
    P_2\ar[r] & P_2 & \\
    & P_2\ar[r] & P_2 \\
    P_2\ar[r, "d_2'"] & P_1\ar[r, "d_1'"] & P_0
   \end{tikzcd}
   \qquad\qquad
   \begin{tikzcd}[row sep=tiny]
    & P_2\ar[rdd] & P_2 \\
    P_2\ar[rdd, "d_2"] & P_2\ar[ru] & \\
    & P_2 & P_2 \\
    P_2\ar[ru] & P_1\ar[r, "d_1"] & P_0
   \end{tikzcd}\]
  Consider the following binary acyclic double complex where the upper row is $\bP_1$, the lower row is $\bP$ with switched differentials plus $\Delta_{P_2}\oplus \Delta_{P_2}\oplus\Delta_{P_2}[1]$ and the middle vertical map $\tau_{P_2}$ denotes the flip of the second and third summand.
  \[\begin{tikzcd}
  P_2 \oplus P_2 \ar[d, shift left, "\tau_{P_2}"]\ar[d, shift right, "\id"']\ar[r, rightarrowtail, shift right]\ar[r, rightarrowtail, shift left] & P_2 \oplus P_2 \oplus P_2 \oplus P_1\ar[d, shift left, "\tau_{P_2}"]\ar[d, shift right, "\id"']\ar[r, twoheadrightarrow, shift right]\ar[r, twoheadrightarrow, shift left] & P_2 \oplus P_2 \oplus P_0\ar[d, shift left, "\tau_{P_2}"]\ar[d, shift right, "\id"']\\
  P_2 \oplus P_2 \ar[r, rightarrowtail, shift right]\ar[r, rightarrowtail, shift left] & P_2 \oplus P_2 \oplus P_2 \oplus P_1\ar[r, twoheadrightarrow, shift right]\ar[r, twoheadrightarrow, shift left] & P_2 \oplus P_2 \oplus P_0
  \end{tikzcd}\]
  Therefore, we obtain using \cref{lem:ordertwo} and \cref{lem:signs}
  \[[\bP]=[\begin{tikzcd}
  P_2\oplus P_2\ar[r, shift left, "\id"]\ar[r, shift right, "\tau_{P_2}"']&P_2\oplus P_2
  \end{tikzcd}]-[\bP_1].\]
 This finishes the proof.
\end{proof}

\bibliographystyle{amsalpha}
\bibliography{shortening}


\end{document}